\newcommand{\norm}[1]{\left|{#1}\right|}
\newcommand{\ceil}[1]{\lceil {#1} \rceil}
\newcommand{\seq}[1]{\{#1\}}
\begin{document}

\title{A note on the finitization of Abelian and Tauberian theorems}

\author{Thomas Powell}
%\footnote{Corresponding
%     author: e-mail: {\sf powell@mathematik.tu-darmstadt.de}, Phone: +49\,6151-16-22844}\inst{1}} 
     %\address[\inst{1}]{Fachbereich Mathematik, Technische Universit\"at Darmstadt, Schlossgartenstra\ss e 7, 64293 Darmstadt, Germany}

\maketitle 

\begin{abstract}
We present finitary formulations of two well known results concerning infinite series, namely Abel's theorem, which establishes that if a series converges to some limit then its Abel sum converges to the same limit, and Tauber's theorem, which presents a simple condition under which the converse holds. Our approach is inspired by proof theory, and in particular G\"{o}del's functional interpretation, which we use to establish quantitative version of both of these results.
\end{abstract}

%%%%%%%%%%%%%%%%%%%%%%%%%%%%%%%%%%%%%%%%%%%%%%%%%
\section{Introduction}
%%%%%%%%%%%%%%%%%%%%%%%%%%%%%%%%%%%%%%%%%%%%%%%%%

In an essay of 2007 \cite{Tao(2007.0)} (later published as part of \cite{Tao(2008.0)}) T. Tao discussed the so-called \emph{correspondence principle} between `soft' and `hard' analysis, whereby many \emph{infinitary} notions from analysis can be given an equivalent \emph{finitary} formulation. An important instance of this phenomenon is provided by the simple concept of Cauchy convergence of a sequence $\seq{c_n}$:
\begin{equation*}
\forall \varepsilon>0\; \exists N\; \forall m,n\geq N\; (|c_m-c_n|\leq \varepsilon).
\end{equation*}
This corresponds to the finitary notion of $\seq{c_n}$ being \emph{metastable}, which is given by the following formula:
\begin{equation}
\label{eqn-metastable}
\forall \varepsilon>0\; \forall g:\NN\to\NN\; \exists N\; \forall m,n\in [N;N+g(N)]\; (|c_m-c_n|\leq \varepsilon),
\end{equation}
where $[N;N+k]:=\{N,N+1,\ldots,N+k-1,N+k\}$. Roughly speaking, a sequence $\seq{c_n}$ is metastable if for any given error $\varepsilon>0$ it contains a finite regions of stability of any `size', where size is represented by the function $g:\NN\to\NN$.
 
The equivalence of Cauchy convergence and metastability is established via purely logical reasoning, and indeed, as was quickly observed, the correspondence principle as presented in \cite{Tao(2007.0)} has deep connections with proof theory. More specifically, the finitary variant of an infinitary statement is typically closely related to its \emph{classical Dialectica interpretation} \cite{AvFef(1998.0)}, which provides a general method for obtaining quantitative versions of mathematical theorems.

Finitary formulations of infinitary properties play a central role in the \emph{proof mining} program developed by U. Kohlenbach from the early 90s \cite{Kohlenbach(2008.0)}. Here, it is often the case that a given mathematical theorem has, in general, no computable realizer (for Cauchy convergence this is demonstrated by the existence of so-called \emph{Specker sequences} \cite{Specker(1949.0)}, which will be discussed further in Section \ref{sec-specker}). On the other hand, the corresponding finitary formulation can typically not only be realized, but a realizer can be directly extracted from a proof that the original property holds. The extraction of a computable bound $\Omega(\varepsilon,g)$ on $N$ in (\ref{eqn-metastable}) -- a so-called \emph{rate of metastability} -- is a standard result in this area (e.g. \cite{Kohlenbach(2005.1),KohKou(2015.0),KohLeu(2012.0),Powell(2019.2)}), and techniques from proof theory are often used to give finitizations of more complex statements, including for example the \emph{Bolzano-Weierstrass theorem} \cite{KohSaf(2010.0)} and \emph{Ramsey's theorem} \cite{KohKr(2009.0),OliPow(2015.1)}.

In this article, we apply the aforementioned ideas to study the relationship between two distinct forms of convergence from a finitary perspective, namely (I) the convergence of an infinite series of reals $\seq{a_n}$, and (II) the limit as $x\to 1^{-}$ of the power series it generates i.e.
\begin{itemize}

\item[(I)] $s_n:=\sum_{i=0}^n a_i$ as $n\to\infty$, and

\item[(II)] $F(x):=\sum_{i=0}^\infty a_ix^i$ as $x\to 1^{-}$.

\end{itemize}
Several classic results apply here. \emph{Abel's theorem} covers one direction, and states that if $\lim_{n\to\infty}s_n=s$ then we also have $\lim_{x\to 1^{-}} F(x)=s$. The converse also holds, subject to the additional condition that $a_n=o(1/n)$, a result due to A. Tauber which is has since become the first and simplest instance of a whole class of results known as \emph{Tauberian theorems}. In this article, we provide new quantitative versions of these two theorems, which take the shape of a route between various forms of metastability. Our formulations of these results are entirely finitary in nature, and capture the combinatorial core of the original proofs. The infinitary theorems can then be directly derived from ours in a uniform way, using purely logical reasoning.

Though both Abel's and Tauber's theorems are elementary to state and prove, establishing in each case a natural finitary formulation from which the original theorem can be rederived is non-trivial, as is generally the case when it comes to correctly finitizing infinitary statements (an illuminating discussion of the subtleties which arise from the similarly elementary \emph{infinite pigeonhole principle} is given in \cite{GasKoh(2010.0)}). We begin by establishing Cauchy variants of Abel and Tauber's theorems which do not explicitly mention limit points. We then demonstrate that Specker phenomena propagate through both theorems, and as such, in order to give quantitative versions we are forced to consider metastable variants of the associated limiting processes. We then state and prove our finitary theorems, and demonstrate how the original theorems can be reobtained in a uniform way.

There are two main motivating factors behind this short article. The first is the fact that Abelian and Tauberian theorems give rise to simple and yet illuminating examples of the correspondence principle and related concepts such as metastability, which can be presented in such a way that we are not required to explicitly introduce any proof theoretic concepts (indeed, even the notion of a higher order functional is only needed in Section \ref{sec-reobtain} to rederive the original results). As such, it is hoped that our analysis will be of interest to a general mathematical audience. A brief note on the underlying proof theory and connections with e.g. G\"{o}del's Dialectica interpretation is provided in Section \ref{sec-prooftheory}, but this is not required in order to follow the main part of the paper. 

More importantly though, we consider the relatively simple results here as paving the way for a more advanced study of theorems of Abelian or Tauberian type, of which those studied here are the simplest. In particular, Tauber's theorem was significantly generalised by Hardy and Littlewood \cite{Titchmarsh(1939.0)} and then by Wiener \cite{Wiener(1932.0)} (see e.g. \cite{Korevaar(2004.0)} for a modern survey covering these and later developments). We conjecture that a wealth of interesting case studies for applied proof theory can be found in this area, and hope that in this article to have taken a first step in this direction.

%%%%%%%%%%%%%%%%%%%%%%%%%%%%%%%%%%%%%%%%%%%%%%%%%
\section{Cauchy variants of convergence properties}
%%%%%%%%%%%%%%%%%%%%%%%%%%%%%%%%%%%%%%%%%%%%%%%%%
\label{sec-cauchy}

We start off with some preliminary mathematical results, with the aim of setting up suitable Cauchy formulations of both Abel's and Tauber's theorem, which will then be analysed over the remainder of the paper. For the sake of completeness, we begin by stating these theorems as they are usually formulated.
\begin{theorem}
[Abel's theorem]
\label{thm-abel}
Let $\seq{a_n}$ be a sequence of reals and suppose that $\sum_{i=0}^\infty a_i=s$. Then $\lim_{x\to 1^{-}}\sum_{i=0}^\infty a_ix^i=s$.
\end{theorem}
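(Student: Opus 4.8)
The plan is to reduce the statement to the convergence behaviour of the partial sums $s_n=\sum_{i=0}^n a_i$ by means of Abel's summation by parts, which is the combinatorial heart of the argument. First I would note that, since $\sum_{i=0}^\infty a_i$ converges, we have $a_n\to 0$ and hence $\seq{s_n}$ is bounded, so that $\sum_{i=0}^\infty s_i x^i$ converges absolutely for every $x\in[0,1)$ and $F(x)$ is well defined there. Then, applying summation by parts to the finite sums, $\sum_{i=0}^n a_i x^i=(1-x)\sum_{i=0}^{n-1} s_i x^i + s_n x^n$, and letting $n\to\infty$ (using $s_n x^n\to 0$ as $|x|<1$ and $\seq{s_n}$ bounded), I obtain the key identity $F(x)=(1-x)\sum_{i=0}^\infty s_i x^i$. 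Since also $(1-x)\sum_{i=0}^\infty x^i=1$ for $x\in[0,1)$, multiplying this by $s$ and subtracting gives $F(x)-s=(1-x)\sum_{i=0}^\infty (s_i-s)x^i$.

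With this identity in hand, the remainder is a routine $\varepsilon$-estimate. Given $\varepsilon>0$, pick $N$ with $|s_i-s|\leq\varepsilon/2$ for all $i\geq N$. Splitting the series at $N$, the tail $(1-x)\sum_{i\geq N}(s_i-s)x^i$ is bounded in absolute value by $(\varepsilon/2)(1-x)\sum_{i\geq N}x^i\leq\varepsilon/2$ uniformly in $x\in[0,1)$, while the head $(1-x)\sum_{i<N}(s_i-s)x^i$ has absolute value at most $(1-x)\cdot N\cdot\max_{i<N}|s_i-s|$, which tends to $0$ as $x\to 1^{-}$. Hence there is $\delta>0$ with $|F(x)-s|\leq\varepsilon$ whenever $1-\delta<x<1$, which is precisely $\lim_{x\to 1^{-}}F(x)=s$.

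I do not expect any genuine obstacle: the only points needing a little care are the justification of the limit-swap in deriving the summation-by-parts identity (handled by absolute convergence together with $|x|<1$) and keeping the head and tail estimates uniform in $x$. From the finitary perspective of this paper the interest lies not in this proof itself but in making explicit the choices of $N$ and $\delta$ that it involves --- that is, in tracking how $\delta$ depends on $\varepsilon$ and on a rate of convergence for $\seq{s_n}$, together with a uniform bound on $\seq{s_n}$ --- and it is this quantitative content that the subsequent sections will extract and recast as a route between metastability statements.
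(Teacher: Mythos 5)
Your proof is correct: the summation-by-parts identity $F(x)=(1-x)\sum_{i=0}^\infty s_ix^i$, the resulting formula $F(x)-s=(1-x)\sum_{i=0}^\infty(s_i-s)x^i$, and the head/tail split at $N$ with the tail bounded by $\tfrac{\varepsilon}{2}(1-x)\sum_{i\geq N}x^i\leq\tfrac{\varepsilon}{2}$ and the head killed by $(1-x)\to 0$ constitute the standard argument, and every limit interchange you invoke is justified by the boundedness of $\seq{s_n}$ and $|x|<1$. Note, however, that the paper never proves Theorem \ref{thm-abel} this way: its route is indirect, first stating the Cauchy variant (Theorem \ref{thm-abelcauchy}), whose conclusion $\lim_{m,n\to\infty}|F(x_m)-s_n|=0$ mentions no limit value, then proving the finitary Theorem \ref{thm-abelfin}, and finally recovering Theorem \ref{thm-abelcauchy} (and hence, via Lemma \ref{lem-eqcauchy}, Theorem \ref{thm-abel}) by the logical machinery of Section \ref{sec-reobtain}. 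The combinatorial core is the same as yours --- the paper uses the truncated identity $F_l(x)=s_lx^l+(1-x)\sum_{i=0}^{l-1}s_ix^i$ and the same head/tail decomposition --- but with two changes that carry the quantitative content: the limit $s$ is replaced by a nearby partial sum $s_n$ from a metastable window (so only Cauchy-type information $|s_i-s_n|\leq\tfrac{\varepsilon}{4}$ is ever used), and the infinite series $F$ is replaced by the finite truncation $F_l$ with $l$ computed explicitly via $\omega$ and Lemma \ref{lem-powerconv}. Your direct proof is shorter and perfectly adequate for the infinitary statement, but it presupposes the limit $s$ and an unquantified choice of $\delta$; the paper's detour is precisely what turns those choices into an explicit rate of metastability, as you correctly anticipate in your closing remarks.
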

\begin{theorem}
[Tauber's theorem]
\label{thm-taub}
Let $\seq{a_n}$ be a sequence of reals with $a_n=o(1/n)$ and suppose that $\lim_{x\to 1^{-}}\sum_{i=0}^\infty a_ix^i=s$. Then $\sum_{i=0}^\infty a_i=s$.
\end{theorem}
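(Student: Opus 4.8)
The plan is to run the classical argument, which compares the partial sums $s_n := \sum_{i=0}^n a_i$ with the values $F(x) := \sum_{i=0}^\infty a_i x^i$ (well defined for $|x|<1$, since $a_n = o(1/n)$ is in particular bounded). The starting point is the identity, valid for $0 \le x < 1$,
\[
 s_n - F(x) \;=\; \sum_{i=0}^n a_i(1-x^i) \;-\; \sum_{i=n+1}^\infty a_i x^i ,
\]
whence
\[
 |s_n - s| \;\le\; |s - F(x)| \;+\; \Bigl|\sum_{i=0}^n a_i(1-x^i)\Bigr| \;+\; \Bigl|\sum_{i=n+1}^\infty a_i x^i\Bigr| .
\]
The first term on the right tends to $0$ along any sequence $x = x_n \to 1^-$ by hypothesis, so the task reduces to controlling the head and the tail.

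For the head I would use the elementary bound $1 - x^i = (1-x)(1 + x + \dots + x^{i-1}) \le i(1-x)$, which gives $\bigl|\sum_{i=0}^n a_i(1-x^i)\bigr| \le (1-x)\sum_{i=0}^n i|a_i|$. For the tail, write $\epsilon_i := i|a_i|$, a null sequence by assumption, and $M_n := \sup_{i > n} \epsilon_i \to 0$; then $|a_i| \le M_n/i \le M_n/(n+1)$ for $i > n$, so $\bigl|\sum_{i=n+1}^\infty a_i x^i\bigr| \le \frac{M_n}{n+1}\sum_{i=n+1}^\infty x^i \le \frac{M_n}{(n+1)(1-x)}$. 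Now take $x_n := 1 - \frac{1}{n+1}$, so that $1 - x_n = \frac{1}{n+1}$: the tail bound collapses to $M_n \to 0$, and the head bound becomes $\frac{1}{n+1}\sum_{i=0}^n \epsilon_i$, which tends to $0$ because the Ces\`{a}ro averages of a null sequence are themselves null. Since $x_n \to 1^-$ we also have $F(x_n) \to s$, and feeding all three estimates into the displayed inequality yields $s_n \to s$, i.e.\ $\sum_{i=0}^\infty a_i = s$.

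The geometric-series manipulations are routine; the one genuinely non-trivial ingredient is the Ces\`{a}ro step, namely that $\frac{1}{n+1}\sum_{i=0}^n \epsilon_i \to 0$ whenever $\epsilon_i \to 0$. This is precisely where the quantitative content of Tauber's theorem is concentrated: turning a rate for $\epsilon_i \to 0$ (together with a rate for $F(x) \to s$) into a rate for $s_n \to s$ is delicate, since the averaging entangles information about all of $\epsilon_0, \dots, \epsilon_n$, and I expect this to be the main obstacle in the passage from the soft argument above to the metastable reformulation studied in the rest of the paper. One must also check that the coupling $x_n = 1 - 1/(n+1)$ correctly balances the factor $(1-x)$ in the head against the factor $1/(1-x)$ in the tail; choices of the form $x_n = 1 - c/n$ work equally well, but the fact that $x$ must be tied to $n$ is essential to the argument.
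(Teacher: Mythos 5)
Your proof is correct: the identity $s_n-F(x)=\sum_{i=0}^n a_i(1-x^i)-\sum_{i=n+1}^\infty a_ix^i$, the bound $1-x^i\leq i(1-x)$, the tail estimate via $\sup_{i>n} i|a_i|$, and the coupling $x_n=1-\tfrac{1}{n+1}$ together give the standard (and valid) argument. Structurally, though, you take the direct infinitary route, whereas the paper never proves Theorem \ref{thm-taub} head-on: it first recasts it as the Cauchy variant (Theorem \ref{thm-taubcauchy}), which is what gets finitized as Theorem \ref{thm-taubfin}, and Theorem \ref{thm-taub} is then recovered via Lemma \ref{lem-eqcauchy}(ii). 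The estimates inside Theorem \ref{thm-taubfin} are precisely the quantitative skeleton of your argument: the head/tail split with $v_n=1-\tfrac1n$, the inequality $(1-x^i)\leq i(1-x)$, and -- in place of your Ces\`aro step -- the splitting of $\tfrac1n\sum_{i=0}^n i|a_i|$ at a threshold $N_1$ with $i|a_i|\leq\tfrac{\varepsilon}{8}$ for $i\geq N_1$, absorbed by requiring $n\geq \tfrac{2LN_1^2}{\varepsilon}$. So what the paper's detour buys is exactly the quantitative, metastable content; your version buys brevity but keeps the limit quantifiers that the paper is at pains to eliminate. One small correction to your closing remark: the genuinely delicate point is not the Ces\`aro averaging itself, which is quantitatively harmless once one has a rate for $i|a_i|\to 0$ (as the paper's $2LN_1^2/\varepsilon$ threshold shows); rather, Proposition \ref{prop-tauberspecker} shows that the premise $\seq{F(v_n)}$ being Cauchy can fail to have any computable rate, and it is this Specker phenomenon that forces the metastable formulation.
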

Our preference for Cauchy variants of these results lies in the fact that we do not have to directly deal with limits, making the quantifier complexity of the underlying notions of convergence significantly simpler. In particular, we want to formulate the statement that $\lim_{n\to \infty} s_n=\lim_{x\to ^{-}} F(x)$ without mentioning either of the limits explicitly.
%We start by treating the continuity of functions:
%
%\begin{lemma}
%\label{lem-convcauchy}
%Let $F:[0,1)\to \RR$ be a function. Then $\lim_{x\to 1^{-}} F(x)$ exists iff for all sequences $\seq{x_n}$ in $[0,1)$ with $\lim_{n\to\infty} x_n=1$ the sequence $\seq{F(x_n)}$ is Cauchy.
%\end{lemma}
%
%\begin{proof}
%If $\lim_{x\to 1^{-}} F(x)$ exists there is some $s$ such that then whenever $x_n\to 1$ as $n\to\infty$ then $F(x_n)\to s$, and so in particular $\seq{F(x_n)}$ must be Cauchy. For the converse, pick some arbitrary $\seq{y_n}$ with $y_n\to 1$. Then $\seq{F(y_n)}$ is Cauchy and must therefore converge to some limit $s$. It remains to show that $F(x_n)\to s$ for an arbitrary sequence with $x_n\to 1$. Supposing for contradiction that $F(x_n)\to s'\neq s$, and define $\seq{z_n}$ by $z_{2n}:=x_n$ and $z_{2n+1}:=y_n$. Then $z_n\to 1$ but $\seq{F(z_n)}$ cannot be Cauchy.
%\end{proof}
%
\begin{lemma}
\label{lem-eqcauchy}
Let $F:[0,1)\to\RR$ be a function and $\seq{s_n}$ a sequence of reals. Then each of the following implies $\lim_{n\to\infty} s_n=\lim_{x\to 1^{-}} F(x)$:
\begin{enumerate}[(i)]

\item $\seq{s_n}$ converges and $\lim_{m,n\to\infty}|F(x_m)-s_n|=0$ for all $\seq{x_m}$ in $[0,1)$ with $\lim_{m\to\infty} x_m=1$.

\item $\lim_{x\to 1^{-}} F(x)$ exists and $\lim_{m,n\to\infty}|F(y_m)-s_n|=0$ for some $\seq{y_m}$ in $[0,1)$ with $\lim_{m\to\infty} y_n=1$.

\end{enumerate}
\end{lemma}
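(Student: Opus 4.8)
The plan is to use the standard fact that, for a function on $[0,1)$, the one-sided limit $\lim_{x\to 1^-}F(x)$ exists and equals $t$ if and only if $\lim_{m\to\infty}F(x_m)=t$ for every sequence $\seq{x_m}$ in $[0,1)$ with $x_m\to 1$. Granting this sequential characterisation, both parts reduce to a routine $\varepsilon/2$-argument built around the triangle inequality $\norm{F(x_m)-t}\leq\norm{F(x_m)-s_n}+\norm{s_n-t}$, where the two terms on the right are controlled respectively by the double-limit hypothesis and by the convergence assumed in each case. The two cases are essentially mirror images of one another, the role of the ``fixed'' variable being played by $n$ in (i) and by $m$ in (ii).

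For (i), write $s:=\lim_{n\to\infty}s_n$ and let $\seq{x_m}$ be an arbitrary sequence in $[0,1)$ with $x_m\to 1$; I claim $F(x_m)\to s$, which by the characterisation above gives $\lim_{x\to1^-}F(x)=s=\lim_{n\to\infty}s_n$, as required. Fix $\varepsilon>0$. The hypothesis $\lim_{m,n\to\infty}\norm{F(x_m)-s_n}=0$ supplies $N_1$ with $\norm{F(x_m)-s_n}\leq\varepsilon/2$ whenever $m,n\geq N_1$, and $s_n\to s$ supplies $N_2$ with $\norm{s_n-s}\leq\varepsilon/2$ for $n\geq N_2$. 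Picking any $n^\ast\geq\max(N_1,N_2)$, the triangle inequality gives $\norm{F(x_m)-s}\leq\varepsilon$ for every $m\geq N_1$, so $F(x_m)\to s$.

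For (ii), write $t:=\lim_{x\to1^-}F(x)$ and let $\seq{y_m}$ be the sequence furnished by the hypothesis, so $y_m\to1$ and $\lim_{m,n\to\infty}\norm{F(y_m)-s_n}=0$. I claim $s_n\to t$, which again yields the stated equality of limits. Fix $\varepsilon>0$. Since $y_m\to1$, the sequential characterisation of the one-sided limit gives $F(y_m)\to t$, hence some $M_1$ with $\norm{F(y_m)-t}\leq\varepsilon/2$ for $m\geq M_1$; the double-limit hypothesis gives $N_1$ with $\norm{F(y_m)-s_n}\leq\varepsilon/2$ for $m,n\geq N_1$. Fixing $m^\ast\geq\max(M_1,N_1)$ and applying the triangle inequality, $\norm{s_n-t}\leq\varepsilon$ for all $n\geq N_1$, so $s_n\to t$.

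I do not expect any real obstacle here beyond careful bookkeeping of the various thresholds: the only point needing a moment's care is the passage between the $x\to1^-$ formulation of the limit and its sequential form, and in (ii) the observation that this may be applied to the \emph{specific} sequence $\seq{y_m}$ provided by the hypothesis. The value of the lemma lies not in its difficulty but in the fact that it lets us phrase ``$\lim_{n\to\infty} s_n=\lim_{x\to1^-}F(x)$'' purely in terms of Cauchy-type conditions on $\seq{s_n}$ and on the quantities $\norm{F(x_m)-s_n}$, which is precisely what is needed to set up the finitary analysis that follows.
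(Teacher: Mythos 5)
Your proposal is correct and follows essentially the same route as the paper: the same $\varepsilon/2$ triangle-inequality argument in each case, combined with the sequential characterisation of $\lim_{x\to 1^-}F(x)$ (which the paper uses implicitly when passing between the sequence formulation and the one-sided limit). No gaps.
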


\begin{proof}
From (i), suppose that $\lim_{n\to\infty} s_n=s$ and $\lim_{m\to\infty} x_m=1$. Then for any $\varepsilon>0$ there is a sufficiently large $N$ such that $|F(x_m)-s_n|\leq\tfrac{\varepsilon}{2}$ for all $m,n\geq N$ and $N'$ such that $|s_n-s|\leq\tfrac{\varepsilon}{2}$ for all $n\geq N'$, and thus $|F(x_m)-s|\leq\varepsilon$ for $m\geq N$. Since $\{x_m\}$ was arbitrary, we have shown that $\lim_{m\to\infty} F(x_m)= s$ whenever $\lim_{m\to\infty}x_m= 1$ and thus $\lim_{x\to 1^{-}}F(x)=s$. Similarly, from (ii), if $\lim_{x\to 1^{-}}F(x)=s$ then in particular $F(y_m)\to s$ and thus for any $\varepsilon>0$ there is some $N$ with $|F(y_m)-s|\leq\tfrac{\varepsilon}{2}$ for all $m\geq N$ and some $N'$ with $|F(y_m)-s_n|\leq\tfrac{\varepsilon}{2}$ for all $m,n\geq N'$. Therefore $|s_n-s|\leq\varepsilon$ for all $n\geq N'$, and so we have shown that $\lim_{n\to\infty} s_n=s$.
\end{proof}

%\begin{proof}

%For (i), suppose that $\lim_{n\to\infty} s_n=s$ and $\lim_{m\to\infty} x_m=1$. Then for any $\varepsilon>0$ there is a sufficiently large $N$ such that $|F(x_m)-s_n|\leq\tfrac{\varepsilon}{2}$ and $|s_n-s|\leq\tfrac{\varepsilon}{2}$ for all $m,n\geq N$, and thus $|F(x_m)-s|\leq\varepsilon$ for $m\geq N$. Since $\{x_m\}$ was arbitrary, we have $\lim_{x\to 1^{-}}F(x)=s$. Similarly, for (ii) if $\seq{F(x_n)}$ is Cauchy whenever $\lim_{n\to\infty} x_n=1$ then $\lim_{x\to 1^{-}}F(x)=s$ for some $s$ by Lemma \ref{lem-convcauchy}, and in particular $\lim_{m\to\infty}F(z_m)=s$, and thus for any $\varepsilon>0$ there is some $N$ with $|F(y_m)-s|\leq\tfrac{\varepsilon}{2}$ and $|F(y_m)-s_n|\leq\tfrac{\varepsilon}{2}$ and thus $|s_n-s|\leq\varepsilon$ for all $m,n\leq N$. Therefore $\lim_{n\to\infty} s_n=s$.
%\end{proof}
%
We are now able to give Cauchy variants to both main theorems. For the remainder of the paper, we define $\seq{s_n}$ and $F:[0,1)\to\RR$ as
\begin{equation*}
s_n:=\sum_{i=0}^n a_i \ \ \ \mathrm{and} \ \ \ F(x):=\sum_{i=0}^\infty a_ix^i
\end{equation*}
where $\seq{a_n}$ will be some given sequence of real numbers.

\begin{theorem}
[Abel's theorem, Cauchy variant]
\label{thm-abelcauchy}
Let $\seq{a_n}$ be a sequence of reals such that $\{s_n\}$ is Cauchy. Then $\lim_{m,n\to\infty}|F(x_m)-s_n|=0$ whenever $\lim_{m\to\infty} x_m=1$.
\end{theorem}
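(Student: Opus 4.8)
The plan is to follow the combinatorial core of the classical proof of Abel's theorem, but phrased entirely in terms of the Cauchy condition on $\seq{s_n}$ rather than an explicit limit $s$. First I would recall the key algebraic manipulation: using summation by parts (Abel summation), one expresses the partial-sum-like quantity $\sum_{i=0}^n a_i x^i$ in terms of the $s_i$, namely $\sum_{i=0}^n a_i x^i = (1-x)\sum_{i=0}^{n-1} s_i x^i + s_n x^n$. Letting $n\to\infty$ (and using that the $s_i$ are bounded, being Cauchy, so the power series converges for $x\in[0,1)$), this gives the identity $F(x) = (1-x)\sum_{i=0}^\infty s_i x^i$ for $x\in[0,1)$. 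From the same identity applied to a constant sequence, $(1-x)\sum_{i=0}^\infty x^i = 1$, so for any fixed reference index $k$ we can write
\begin{equation*}
F(x) - s_n = (1-x)\sum_{i=0}^\infty (s_i - s_n) x^i.
\end{equation*}

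Next I would split this series at a cutoff $K$ chosen from the Cauchy property: given $\varepsilon > 0$, pick $K$ so that $|s_i - s_j| \le \varepsilon/2$ for all $i,j \ge K$. For the tail $i \ge K$ and $n \ge K$, the terms $|s_i - s_n|$ are all $\le \varepsilon/2$, and $(1-x)\sum_{i\ge K} x^i \le (1-x)\sum_{i\ge 0} x^i = 1$, so the tail contributes at most $\varepsilon/2$. For the finite head $\sum_{i=0}^{K-1}(s_i - s_n)x^i$, each $|s_i - s_n|$ is bounded by some constant $C$ (again by Cauchyness, the sequence is bounded), so the head is bounded by $(1-x) \cdot K \cdot C$, which can be made $\le \varepsilon/2$ by taking $x$ close enough to $1$, say $1 - x \le \varepsilon/(2KC)$. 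Finally, since $\lim_{m\to\infty} x_m = 1$, there is an $M$ such that $1 - x_m \le \varepsilon/(2KC)$ for all $m \ge M$; setting $N := \max\{K, M\}$ we get $|F(x_m) - s_n| \le \varepsilon$ for all $m, n \ge N$, which is exactly the assertion $\lim_{m,n\to\infty}|F(x_m) - s_n| = 0$.

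The main obstacle — and the place that actually requires care — is justifying the interchange of limits and the convergence of the auxiliary power series $\sum s_i x^i$ for $x \in [0,1)$: one must note explicitly that a Cauchy sequence is bounded, so $|s_i| \le C$ for all $i$, which makes $\sum s_i x^i$ absolutely convergent on $[0,1)$ and legitimizes both the summation-by-parts identity in the limit $n\to\infty$ and the head/tail split. Everything else is the standard $\varepsilon/2$ bookkeeping. It is worth emphasizing that no limit value $s$ is ever invoked, and that the constants $K$ and $C$ extracted here are precisely the data that, in the later sections, will be turned into an explicit rate; the head-versus-tail tradeoff (the choice of $K$ from the rate of Cauchyness, balanced against how close $x$ must be to $1$) is the combinatorial heart of the argument that the finitary version will make quantitative.
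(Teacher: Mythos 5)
Your proof is correct, but it takes a different route from the paper. You give a direct infinitary argument: the summation-by-parts identity $F(x)=(1-x)\sum_{i=0}^\infty s_ix^i$ (legitimate since a Cauchy sequence is bounded), the rewriting $F(x)-s_n=(1-x)\sum_{i=0}^\infty(s_i-s_n)x^i$, and a head/tail split at a cutoff $K$ supplied by the full Cauchy property, with the head controlled by taking $1-x\leq\varepsilon/(2KC)$. The paper never proves Theorem \ref{thm-abelcauchy} this way; instead it isolates exactly this combinatorial core (the same Abel summation and head/tail estimate, but with the truncation $F_l$ and Lemma \ref{lem-powerconv} in place of the infinite identity) as the finitary Theorem \ref{thm-abelfin}, stated for finite windows $[N;N+g(N)]$, and then recovers the Cauchy variant from it in Section \ref{sec-reobtain} by purely logical means: Lemma \ref{lem-pi3}, a weak form of choice producing the functional $\Phi$, and a carefully built counterfunction $f$ that feeds the metastable form of Cauchyness into the finitary theorem. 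What your route buys is brevity and familiarity; what it gives up is precisely the point of the paper: your argument consumes an ideal cutoff $K$ valid for \emph{all} $i,j\geq K$, which by the Specker phenomenon (Proposition \ref{prop-abelspecker}) is not computably available, so it yields no quantitative information, whereas the paper's detour through Theorem \ref{thm-abelfin} needs only metastability data and produces explicit rates of metastability for the conclusion (as exploited in Section \ref{sec-prooftheory}). As a verification of the infinitary statement your proof is sound; as a replacement for the paper's derivation it would discard the finitary content the paper is designed to extract.
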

To see that Theorem \ref{thm-abelcauchy} implies Theorem \ref{thm-abel}, suppose that $\lim_{n\to\infty} s_n=s$. Then in particular $\seq{s_n}$ is Cauchy, and so $\lim_{m,n\to\infty}|F(x_m)-s_n|=0$ whenever $\lim_{m\to\infty} x_m=1$. But then by Lemma \ref{lem-eqcauchy} we have $\lim_{x\to 1^{-}} F(x)=s$.
\begin{theorem}
[Tauber's theorem, Cauchy variant]
\label{thm-taubcauchy}
Let $\seq{a_n}$ be a sequence of reals with $a_n=o(1/n)$ and suppose that $\seq{F(v_n)}$ is Cauchy, where $v_n:=1-\tfrac{1}{n}$. Then $\lim_{m,n\to\infty}|F(v_m)-s_n|=0$.
\end{theorem}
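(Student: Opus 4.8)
The plan is to reduce the claim to the single estimate
\[
|s_n - F(v_n)| \to 0 \quad\text{as } n\to\infty ,
\]
which uses only the hypothesis $a_n = o(1/n)$, and then to let the Cauchy hypothesis on $\seq{F(v_n)}$ act as a bridge between $F(v_m)$ and $s_n$. Concretely, for any $m,n$ one has
\[
|F(v_m) - s_n| \le |F(v_m) - F(v_n)| + |F(v_n) - s_n| ,
\]
so given $\varepsilon>0$ we may pick $N$ beyond both the Cauchy threshold of $\seq{F(v_n)}$ for error $\varepsilon/2$ and the point past which $|F(v_n)-s_n|\le\varepsilon/2$; for $m,n\ge N$ the right-hand side is then $\le\varepsilon$, which is the conclusion. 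Note that since $a_n = o(1/n)$ the sequence $\seq{a_n}$ is bounded, so the power series defining $F$ converges absolutely on $[0,1)$ and $F(v_n)$ is well defined for $n\ge 1$.

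The substance of the proof is therefore the estimate $|F(v_n)-s_n|\to 0$. I would start from the exact identity
\[
s_n - F(v_n) = \sum_{i=0}^{n} a_i\,(1 - v_n^i) \;-\; \sum_{i=n+1}^{\infty} a_i\, v_n^i ,
\]
and bound the two pieces separately. For the head sum the elementary inequality $1 - v_n^i \le i(1-v_n) = i/n$ gives $\bigl|\sum_{i=0}^n a_i(1-v_n^i)\bigr| \le \tfrac1n\sum_{i=1}^n i|a_i|$. For the tail sum, using $|a_i| \le \bigl(\sup_{j>n} j|a_j|\bigr)/i$ for $i>n$ together with $\sum_{i>n} v_n^i/i \le \tfrac{1}{n+1}\cdot\tfrac{v_n^{n+1}}{1-v_n} \le 1$ (recall $1-v_n = 1/n$) yields $\bigl|\sum_{i>n} a_i v_n^i\bigr| \le \sup_{j>n} j|a_j|$. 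Altogether,
\[
|s_n - F(v_n)| \;\le\; \frac1n\sum_{i=1}^{n} i|a_i| \;+\; \sup_{j>n} j|a_j| .
\]

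It then remains to see that both terms on the right tend to $0$. Setting $b_i := i|a_i|$, the condition $a_n = o(1/n)$ says precisely that $b_i\to 0$, which at once gives $\sup_{j>n} b_j \to 0$; and a standard Ces\`aro argument gives $\tfrac1n\sum_{i=1}^n b_i \to 0$ as well. I expect this last point to be the only genuinely delicate step: to control the average $\tfrac1n\sum_{i=1}^n i|a_i|$ one splits the sum at an index determined by how fast $i|a_i|\to 0$, so that the initial block becomes negligible after division by $n$ while the remaining block is small termwise. This is exactly the place where, in a quantitative or finitary reading, the rate has to absorb a modulus witnessing $a_n = o(1/n)$; everything else in the argument consists of explicit inequalities and poses no difficulty.
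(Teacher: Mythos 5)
Your proof is correct, and the analytic core is the same as what the paper buries inside its finitary Theorem \ref{thm-taubfin}: you split $s_n-F(v_n)$ at the index $n$, use $1-v_n^i\le i(1-v_n)=i/n$ on the head, and bound the tail via $\sum_{i>n}v_n^i/i\le\frac{1}{n+1}\cdot\frac{v_n^{n+1}}{1-v_n}\le 1$, then bridge to $F(v_m)$ with the Cauchy hypothesis. The route, however, differs from the paper's own proof of Theorem \ref{thm-taubcauchy}. You argue directly at the infinitary level: the key lemma is $|F(v_n)-s_n|\to 0$, obtained from $i|a_i|\to 0$ via a Ces\`aro average and a supremum over tails, and you handle the full infinite tail $\sum_{i>n}a_iv_n^i$ in one step, so you never need the truncations $F_l$, the modulus $\omega$, or Lemma \ref{lem-powerconv}. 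The paper instead first proves the purely finitary Theorem \ref{thm-taubfin}, in which the Ces\`aro step appears as an explicit split at the modulus $N_1$ (giving $\frac{LN_1^2}{2n}+\frac{\varepsilon}{8}$) and the tail is cut at $l=\omega(\frac{\varepsilon}{4Lp},p)$, and then recovers Theorem \ref{thm-taubcauchy} by purely logical means: Lemma \ref{lem-pi3} converting Cauchyness into metastability, a choice functional $\Psi$ witnessing the $o(1/n)$ condition, and a carefully constructed counterfunction $f$. Your argument buys brevity and is a perfectly good standalone proof of the Cauchy variant; the paper's detour buys the quantitative content that is its actual point, namely an explicit transformation of rates of metastability for the premises into one for the conclusion, with the infinitary statement then falling out uniformly. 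If you wanted to match the paper's aims you would need to make your two limit steps (the Ces\`aro average and $\sup_{j>n}j|a_j|\to 0$) explicit in terms of a modulus for $i|a_i|\to 0$, which is exactly where the paper's $N_1$ and the bound $L$ enter.
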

That Theorem \ref{thm-taubcauchy} implies Theorem \ref{thm-taub} is similar: Suppose that $a_n=o(1/n)$ and $\lim_{x\to 1^{-}} F(x)=s$. Since $\lim_{m\to\infty} v_m=1$ then $\lim_{m\to\infty} F(v_m)=s$ and so in particular $\seq{F(v_m)}$ is Cauchy, and thus $\lim_{m,n\to\infty}|F(v_m)-s_n|=0$. But then by Lemma \ref{lem-eqcauchy} (ii) it follows that $\lim_{n\to\infty}s_n=s$.

%%%%%%%%%%%%%%%%%%%%%%%%%%%%%%%%%%%%%%%%%%%%%%%%%
\section{On Specker sequences}
%%%%%%%%%%%%%%%%%%%%%%%%%%%%%%%%%%%%%%%%%%%%%%%%%
\label{sec-specker}

In this short section, we show that Specker phenomena propagate through both Theorems \ref{thm-abelcauchy} and \ref{thm-taubcauchy}. For the former this is completely straightforward, but for the latter a little care is needed to construct a suitable sequence satisfying the Tauber condition $a_n=o(1/n)$. These results confirms that we cannot hope to produce general quantitative formulations of either theorem that provide a direct rate of convergence for the conclusion, and so we must instead make use of the relevant notions of \emph{metastability}. 

We first recall that a Specker sequence, first introduced in \cite{Specker(1949.0)}, is a \emph{computable}, \emph{monotonically increasing} and \emph{bounded} sequence of rationals $\seq{q_n}$ whose limit is not a computable real number. What this means in practice is that the sequence possess neither a computable \emph{rate of convergence} nor a computable \emph{rate of Cauchy convergence}, where by the latter we mean a computable function $\pi:\QQ_+\to \NN$ satisfying
\begin{equation*}
\forall \varepsilon\in\QQ_+, \forall m,n\geq \pi(\varepsilon)(|q_m-q_n|\leq \varepsilon),
\end{equation*}
where here $\QQ_+$ denotes the set of all strictly positive rationals.
\begin{proposition}
\label{prop-abelspecker}
There exists some $\seq{a_n}$ satisfying the premise of Theorem \ref{thm-abelcauchy}, whereby for any $\seq{x_n}$ in $[0,1)$ with $\lim_{n\to\infty} x_m=1$, though $\lim_{m,n\to\infty}|F(x_m)-s_n|=0$ this has no computable rate of convergence i.e. a computable function $\pi:\QQ_+\to\NN$ satisfying
\begin{equation*}
\forall \varepsilon\in\QQ_+,\forall m,n\geq \pi(\varepsilon)(|F(x_m)-s_n|\leq \varepsilon).
\end{equation*}
\end{proposition}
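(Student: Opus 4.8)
The plan is to take a Specker sequence and realize it as the sequence of partial sums $\seq{s_n}$ of an appropriate $\seq{a_n}$, then argue that the Specker phenomenon for $\seq{s_n}$ forces a Specker phenomenon for $|F(x_m)-s_n|$. Concretely, let $\seq{q_n}$ be a Specker sequence of rationals — computable, monotonically increasing, bounded — with non-computable limit, and set $a_0 := q_0$ and $a_n := q_n - q_{n-1}$ for $n\geq 1$, so that $s_n = q_n$. Since $\seq{q_n}$ is monotone and bounded it converges, hence is Cauchy, so this $\seq{a_n}$ satisfies the premise of Theorem~\ref{thm-abelcauchy}. By that theorem, for any $\seq{x_m}$ in $[0,1)$ with $\lim_{m\to\infty}x_m=1$ we indeed have $\lim_{m,n\to\infty}|F(x_m)-s_n|=0$; the content of the proposition is that no computable $\pi:\QQ_+\to\NN$ can witness this as a rate of convergence.

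The key step is the reduction: I would show that a computable rate $\pi$ for $|F(x_m)-s_n|\to 0$ yields a computable rate of Cauchy convergence for $\seq{s_n}=\seq{q_n}$, contradicting the defining property of the Specker sequence. For this, fix $\varepsilon\in\QQ_+$ and take $N:=\pi(\varepsilon/2)$ (using $\varepsilon/2$, or $\varepsilon/3$ as convenient). For all $m,n\geq N$ we have $|F(x_m)-s_n|\leq\varepsilon/2$, so for $m,n\geq N$, $|s_m - s_n|\leq |s_m - F(x_k)| + |F(x_k) - s_n|\leq \varepsilon$ for any $k\geq N$. Thus $\varepsilon\mapsto\pi(\varepsilon/2)$ is a rate of Cauchy convergence for $\seq{q_n}$. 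The only subtlety is that $\pi$ is given relative to a fixed sequence $\seq{x_m}$ converging to $1$; but the statement only asserts non-existence of such a $\pi$ for an \emph{arbitrary} such sequence, and since there certainly exist computable sequences with $x_m\to 1$ (e.g. $x_m := 1 - 1/(m+1)$), the argument above applies verbatim and produces the contradiction. If one wishes to phrase it as ``for any $\seq{x_n}$'', one simply notes that the implication ``$\pi$ is a rate for $|F(x_m)-s_n|$'' $\Rightarrow$ ``$\varepsilon\mapsto\pi(\varepsilon/2)$ is a rate of Cauchy convergence for $\seq{s_n}$'' holds for every such $\seq{x_m}$, so a computable $\pi$ for even one of them already contradicts the Specker property.

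The only mild obstacle I anticipate is a bookkeeping one: making sure the value $F(x_m)$ is well-defined and finite, i.e. that $\sum a_i x^i$ converges for $x\in[0,1)$. This is immediate here since $\seq{a_n}$ is the difference sequence of a bounded sequence, hence bounded, so $\sum a_i x^i$ converges absolutely on $[0,1)$; alternatively one can invoke Abel summation. Beyond that, the argument is essentially the triangle-inequality reduction above, and the heart of the matter is simply observing that the partial-sum Specker sequence is embedded directly in the hypothesis $\seq{s_n}$ Cauchy, so its non-computability of convergence rate cannot be erased by passing to $|F(x_m)-s_n|$.
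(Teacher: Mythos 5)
Your proposal is correct and follows essentially the same route as the paper: the same partial-sum construction $s_n=q_n$ from a Specker sequence, and the same triangle-inequality reduction showing that a computable rate $\pi$ for $|F(x_m)-s_n|\to 0$ (for any fixed $\seq{x_m}\to 1$) gives $\varepsilon\mapsto\pi(\varepsilon/2)$ as a computable rate of Cauchy convergence for $\seq{q_n}$, a contradiction.
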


\begin{proof}
Take any Specker sequence $\seq{q_n}$ and define $a_0:=q_0$ and $a_{n+1}:=q_{n+1}-q_n$, so that $s_n=q_n$, and so by definition $\seq{s_n}$ is Cauchy. Suppose for contradiction there exists some $\seq{x_m}$ with $\lim_{m\to\infty} x_m=1$ such that $|F(x_m)-s_n|\to 0$ as $m,n\to\infty$ with a computable rate of convergence $\pi$. Then for any $\varepsilon\in\QQ_+$ we have
\begin{equation*}
|s_m-s_n|\leq |s_m-F(x_{\pi(\varepsilon/2)})|+|F(x_{\pi(\varepsilon/2)})-s_n|\leq \varepsilon 
\end{equation*}
for all $m,n\geq \pi(\varepsilon/2)$, and thus $\seq{s_n}$ has a computable rate of Cauchy convergence, which is false.
\end{proof}
\begin{proposition}
\label{prop-tauberspecker}
There exists $\seq{a_n}$ satisfying the premise of Theorem \ref{thm-taubcauchy}, whereby though $\lim_{m,n\to\infty}|F(v_m)-s_n|=0$ this has no computable rate of convergence.
\end{proposition}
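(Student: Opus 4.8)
The plan is to mimic the structure of the proof of Proposition \ref{prop-abelspecker}, but the key difficulty is that the naive construction $a_0 := q_0$, $a_{n+1} := q_{n+1} - q_n$ from a general Specker sequence need not satisfy the Tauber condition $a_n = o(1/n)$. So the first task is to produce a Specker sequence whose consecutive differences are $o(1/n)$. I would start from \emph{any} computable, monotone, bounded sequence of rationals $\seq{q_n}$ whose limit is not computable, and then ``slow it down'': pass to a sparser or re-indexed version, or better, replace $q_n$ by a sequence $\seq{r_n}$ that takes each value of $\seq{q_n}$ on a long block of indices, interpolating linearly (or simply holding constant and then making a small jump) so that the jump at step $n$ has size at most, say, $1/n^2$ or more carefully $\varepsilon_n/n$ for a computable $\varepsilon_n \to 0$. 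Concretely, if the $k$-th jump of $\seq{q_n}$ is $\delta_k = q_k - q_{k-1} \geq 0$ with $\sum_k \delta_k$ finite, one can spread the $k$-th jump over a block of length $\ell_k$ (chosen computably and large enough that $\delta_k/\ell_k \cdot (\text{block position}) = o(1/n)$), giving a new monotone bounded computable sequence with the \emph{same} limit, hence still non-computable, but now with differences $a_n = o(1/n)$.

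Once such an $\seq{a_n}$ is in hand, set $s_n := \sum_{i=0}^n a_i$, so $\seq{s_n}$ is exactly this slowed-down Specker sequence and in particular is Cauchy with no computable rate of Cauchy convergence. Next I would check that the premise of Theorem \ref{thm-taubcauchy} holds: since $\seq{s_n}$ converges, Abel's theorem (or Theorem \ref{thm-abelcauchy} together with Lemma \ref{lem-eqcauchy}) gives that $\lim_{x\to 1^-} F(x)$ exists, so $\seq{F(v_n)}$ converges and is therefore Cauchy. Hence the hypotheses of Theorem \ref{thm-taubcauchy} are met, and its conclusion $\lim_{m,n\to\infty} |F(v_m) - s_n| = 0$ holds.

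Finally, the contradiction argument runs exactly as in Proposition \ref{prop-abelspecker}: if this convergence had a computable rate $\pi : \QQ_+ \to \NN$, then for any $\varepsilon \in \QQ_+$ and all $m,n \geq \pi(\varepsilon/2)$ we would have
\begin{equation*}
|s_m - s_n| \leq |s_m - F(v_{\pi(\varepsilon/2)})| + |F(v_{\pi(\varepsilon/2)}) - s_n| \leq \varepsilon,
\end{equation*}
so $\seq{s_n}$ would have a computable rate of Cauchy convergence, contradicting the fact that it is (a re-indexing of) a Specker sequence. I expect the main obstacle to be entirely in the first step — verifying carefully that the block-lengths can be chosen computably so that the differences are genuinely $o(1/n)$ while the sequence remains computable and retains a non-computable limit; the remainder is a routine adaptation of the earlier proof. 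A secondary (minor) point to get right is that ``slowing down'' preserves non-computability of the limit, which is immediate since the limit is unchanged.
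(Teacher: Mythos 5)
Your overall strategy coincides with the paper's: spread the jumps of a Specker sequence over computably chosen blocks so that the differences satisfy the Tauber condition, obtain the premise of Theorem \ref{thm-taubcauchy} via Abel's theorem, and transfer a hypothetical rate back to $\seq{s_n}$ exactly as in Proposition \ref{prop-abelspecker}. The problem is that the step you defer to the end --- ``verifying carefully that the block-lengths can be chosen computably so that the differences are genuinely $o(1/n)$'' --- is where all the content lies, and your first concrete instantiation cannot work. If $|a_n|\leq 1/n^2$ for all large $n$ (or, more generally, if $|a_n|$ is dominated by any computable summable envelope whose series converges at a computable rate), then $|s_m-s_n|\leq \sum_{i>n} 1/i^2\leq 1/n$ for $m\geq n$, so $\seq{s_n}$ has a computable rate of Cauchy convergence; since your construction is computable and preserves the non-computable limit of $\seq{q_n}$, this is a contradiction, and hence no such spreading exists. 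The variant with envelope $\varepsilon_n/n$ is salvageable only if $\varepsilon_n$ is chosen so that $\sum_n \varepsilon_n/n$ diverges (e.g.\ $\varepsilon_n=1/\log n$), so that arbitrarily large jump masses can be absorbed into a single block --- a constraint you do not state and which must be argued.

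The paper's construction avoids fixing any envelope in advance: it spreads the $m$-th jump uniformly over the dyadic block, setting $a_n:=(q_{m+1}-q_m)/2^{m-1}$ for $m=\ceil{\log_2 n}$, so that $n|a_n|\leq 2(q_{m+1}-q_m)\to 0$; the Tauber condition thus follows from $q_{m+1}-q_m\to 0$ alone, with no rate required. Moreover the identity $s_{2^n}=q_{n+1}$ makes the final transfer explicit: a computable rate $\pi$ for $|F(v_m)-s_n|\to 0$ yields, as in Proposition \ref{prop-abelspecker}, a computable Cauchy rate for $\seq{s_n}$ and hence the computable rate $\phi(\varepsilon):=\ceil{\log_2(\pi(\varepsilon/2))}+1$ for $\seq{q_n}$, contradicting Speckerness. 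Your closing justification that the slowed-down sequence has no computable Cauchy rate ``since the limit is unchanged'' is correct (a computable sequence admitting a computable Cauchy rate has a computable limit), so the transfer step is fine; but as it stands the construction itself is a plan rather than a proof, and one of its two proposed realizations is impossible.
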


\begin{proof}
Take any Specker sequence $\seq{q_n}$ and define $a_0:=q_0$, $a_1:=q_1-q_0$ and for $n\geq 2$
\begin{equation*}
a_{n}:=\frac{q_{m+1}-q_m}{2^{m-1}} \; \; \; \mbox{for $m:=\ceil{\log_2(n)}$}. 
\end{equation*}
We first observe that since $2^{m-1}\geq n/2$ we have
\begin{equation*}
n|a_n|=\frac{n}{2^{m-1}}(q_{m+1}-q_m)\leq 2(q_{m+1}-q_m)\to 0
\end{equation*}
as $n\to\infty$, and thus $a_n=o(1/n)$. An easy induction establishes that $s_{2^n}=q_{n+1}$ for all $n\geq 1$, where for the induction step we have
\begin{equation*}
s_{2^n}=s_{2^{n-1}}+\sum_{i=2^{n-1}+1}^{2^n}a_i=q_n+\sum_{i=2^{n-1}+1}^{2^n}\left(\frac{q_{n+1}-q_n}{2^{n-1}}\right)=q_n+(q_{n+1}-q_n)=q_{n+1}.
\end{equation*}
This implies that $\lim_{n\to\infty} s_n=\lim_{n\to\infty} q_n$, and so in particular by Abel's theorem $\lim_{x\to 1^{-}} F(x)$ exists and so $\seq{F(v_n)}$ is Cauchy. Therefore $\seq{a_n}$ satisfies the premise of Theorem \ref{thm-taubcauchy}. But now suppose by contradiction we have a computable rate of convergence $\pi:\QQ_+\to\NN$ for $\lim_{m,n\to\infty}|F(v_m)-s_n|=0$. Then as in the proof of Proposition \ref{prop-abelspecker}, for any $\varepsilon\in\QQ_+$ we have $|s_m-s_n|\leq \varepsilon$ for all $m,n\geq \pi(\varepsilon/2)$, and therefore $|q_m-q_n|\leq \varepsilon$ for all $m,n\geq \phi(\varepsilon)$, where $\phi(\varepsilon):=\ceil{\log_2(\pi(\varepsilon/2))}+1$. But since $\phi$ is computable, this contradicts the assumption that $\seq{q_n}$ is a Specker sequence.
\end{proof}

%%%%%%%%%%%%%%%%%%%%%%%%%%%%%%%%%%%%%%%%%%%%%%%%%
\section{Finitizations of Theorems \ref{thm-abelcauchy} and \ref{thm-taubcauchy}}
%%%%%%%%%%%%%%%%%%%%%%%%%%%%%%%%%%%%%%%%%%%%%%%%%
\label{sec-finitize}

We now give finitary, or quantitative formulations to our Cauchy variants of Abel and Tauber's theorems, given in each case as a route from a metastable version of the premise to that of the conclusion. These results are finitary in the sense that aside from a global bound on our input data, we only consider finite initial segments of this data, and quantitative in the sense that they provide an explicit method for constructing rates of metastability for the conclusion in terms of rates of metastability from the premises. Moreover, the proofs of both results are entirely finitistic in nature, appealing to nothing more than simple arithmetic operations. In what follows, in addition to our notational conventions established in Section \ref{sec-cauchy}, for $l\in\NN$ we define $F_l:[0,1)\to\RR$ by
\begin{equation*}
F_l(x):=\sum_{i=0}^l a_ix^i.
\end{equation*}
In the remainder of this paper, we denote by $\omega:\QQ_+\times\NN_{>0}\to\NN$ some canonical computable function satisfying, for all $\varepsilon\in\QQ_+$ and $p\geq 1$: 
\begin{itemize}

\item $\omega(\varepsilon,p)\geq p$,

\item if $x\in [0,1-\tfrac{1}{p}]$ then $x^l\leq\varepsilon$ whenever $l\geq \omega(\varepsilon,p)$.

\end{itemize}
For instance, using the standard inequality 
\begin{equation*}
(1+y)^r\leq e^{yr} \ \ \ \ \ y\in\RR,r>0
\end{equation*}
we could set $\omega(\varepsilon,p):=p\cdot\ceil{\log(1/\varepsilon)}$, but for notational simplicity we work directly with $\omega$ rather than any specific function. Our first result recalls that a power series has a computable rate of convergence within any compact interval $[0,1-\frac{1}{p}]\subset [0,1)$ given a bound on its coefficients, and thus the function $F$ can be approximated by $F_l$ for $l$ computable in the desired degree of accuracy. 
\begin{lemma}
\label{lem-powerconv}
Let $\seq{|a_n|}$ be bounded above by some $L\in\NN$. Then for any $\varepsilon\in\QQ_+$ and $p\geq 1$ we have 
\begin{equation*}
|F(x)-F_l(x)|\leq \varepsilon
\end{equation*}
whenever $x\in [0,1-\tfrac{1}{p}]$ and $l\geq \omega(\frac{\varepsilon}{Lp},p)$.
\end{lemma}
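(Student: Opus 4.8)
The plan is to estimate the tail of the power series directly by a geometric series. First I would note that the hypothesis $|a_n| \leq L$ guarantees absolute convergence of $F(x) = \sum_{i=0}^\infty a_i x^i$ for any $x \in [0,1)$, so that the difference $F(x) - F_l(x) = \sum_{i=l+1}^\infty a_i x^i$ is well defined, and by the triangle inequality (for series) we have
\begin{equation*}
|F(x) - F_l(x)| \leq \sum_{i=l+1}^\infty |a_i| x^i \leq L \sum_{i=l+1}^\infty x^i = L \cdot \frac{x^{l+1}}{1-x}.
\end{equation*}

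Next I would use the constraint $x \in [0, 1 - \tfrac{1}{p}]$ to control the denominator: here $1 - x \geq \tfrac{1}{p}$, hence $\tfrac{1}{1-x} \leq p$, and combining this with $x^{l+1} \leq x^l$ (valid since $0 \leq x < 1$) gives the cleaner bound $|F(x) - F_l(x)| \leq Lp \cdot x^l$. Finally, the defining property of $\omega$ applies: since $x \in [0, 1 - \tfrac1p]$ and $l \geq \omega(\tfrac{\varepsilon}{Lp}, p)$, we get $x^l \leq \tfrac{\varepsilon}{Lp}$, and therefore $|F(x) - F_l(x)| \leq Lp \cdot \tfrac{\varepsilon}{Lp} = \varepsilon$, as required.

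There is no real obstacle here; the argument is a routine geometric-series estimate. The only minor points to be careful about are that $\omega$ is invoked at the argument $\tfrac{\varepsilon}{Lp}$ (which is why this quotient appears in the statement), and that one should check $Lp \geq 1$ so that $\tfrac{\varepsilon}{Lp}$ remains a legitimate positive rational — this holds since $L \in \NN$ and the interesting case is $L \geq 1$ (if $L = 0$ the statement is trivial as $F = F_l = 0$). Everything else is bookkeeping.
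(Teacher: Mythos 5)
Your argument is correct and is essentially identical to the paper's proof: bound the tail $\sum_{i=l+1}^\infty |a_i|x^i$ by the geometric series $L x^{l+1}/(1-x)$, use $1-x \geq \tfrac{1}{p}$ and $x^{l+1}\leq x^l$, then apply the defining property of $\omega$ at $\tfrac{\varepsilon}{Lp}$. The extra remark about the degenerate case $L=0$ is a harmless addition not present in the paper.
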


\begin{proof}
To see this, we simply observe that
\begin{equation*}
\norm{F(x)-F_l(x)}=\norm{\sum_{i=l+1}^\infty a_ix^i}\leq \sum_{i=l+1}^\infty |a_i| x^i\leq L\left(\frac{x^{l+1}}{1-x}\right)\leq Lpx^{l+1}\leq Lpx^l\leq \varepsilon
\end{equation*}
where in the last step we use the defining property of $\omega$.\end{proof}
We now present our finitary theorems, where we recall the notation 
\begin{equation*}
[n;k]:=\{n,n+1,\ldots,k-1,k\}
\end{equation*}
for $n\leq k$, and just $[n;k]:=\emptyset$ for $k<n$.
\begin{theorem}
[Finite Abel's theorem]
\label{thm-abelfin}
Let $\seq{a_n}$ and $\seq{x_k}$ be arbitrary sequences of reals, and $L\in\NN$ a bound for $\seq{|s_n|}$. Fix some $\varepsilon\in\QQ_+$ and $g:\NN\to\NN$. Suppose that $N_1,N_2\in\NN$ and $p\geq 1$ are such that
\begin{equation*}
|s_i-s_n|\leq \frac{\varepsilon}{4} \ \ \ \mbox{and} \ \ \ \frac{1}{p}\leq 1-x_m \leq \frac{\varepsilon}{8LN_1}
\end{equation*}
for all $i,n\in [N_1;\max\{N+g(N),l\}]$ and all $m\in [N_2;N+g(N)]$ where 
\begin{itemize}

\item $\displaystyle N:=\max\{N_1,N_2\}$,

\item $\displaystyle l:=\omega\left(\frac{\varepsilon}{8Lp},p\right)$.

\end{itemize}
%
%\begin{equation*}
%N:=\max\{N_1,N_2\} \ \ \ \mbox{and} \ \ \ l:=\omega\left(\frac{\varepsilon}{8Lp},p\right)
%\end{equation*}
%
Then we have $|F(x_m)-s_n|\leq \varepsilon$ for all $m,n\in [N;N+g(N)]$.
\end{theorem}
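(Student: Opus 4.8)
The plan is to bound $|F(x_m) - s_n|$ for $m, n \in [N; N+g(N)]$ by inserting suitable intermediate quantities and controlling each piece with the two hypotheses and Lemma~\ref{lem-powerconv}. The natural decomposition is
\begin{equation*}
|F(x_m) - s_n| \leq |F(x_m) - F_l(x_m)| + |F_l(x_m) - s_{N_1}| + |s_{N_1} - s_n|,
\end{equation*}
though it will be cleaner to route through $s_i$ for a generic $i$; the point is that $F_l(x_m)$ is a finite sum $\sum_{i=0}^l a_i x_m^i$, and since $\sum_{i=0}^l a_i = s_l$ one can write $F_l(x_m)$ as a weighted combination of the partial sums $s_0, \dots, s_l$ via Abel summation. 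First I would handle the tail term: since $\frac{1}{p} \leq 1 - x_m$, i.e.\ $x_m \in [0, 1 - \tfrac{1}{p}]$, and $L$ bounds $\seq{|s_n|}$ hence also $\seq{|a_n|}$ (up to the usual factor of $2$, which is why $\varepsilon/8$ rather than $\varepsilon/4$ appears), Lemma~\ref{lem-powerconv} with the parameter $\tfrac{\varepsilon}{8Lp}$ gives $|F(x_m) - F_l(x_m)| \leq \tfrac{\varepsilon}{8}$ precisely when $l \geq \omega(\tfrac{\varepsilon}{8Lp}, p)$, which is exactly the definition of $l$. This is the easy part.

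The crux is estimating $|F_l(x_m) - s_n|$ for $n$ in the stability window. Using Abel/summation-by-parts one has, for $x = x_m$,
\begin{equation*}
F_l(x) = \sum_{i=0}^{l} a_i x^i = (1-x)\sum_{i=0}^{l-1} s_i x^i + s_l x^l,
\end{equation*}
and since $(1-x)\sum_{i=0}^{l-1} x^i + x^l = 1$, the right-hand side is a convex combination of $s_0, \dots, s_l$ with weights $(1-x)x^i$ (for $i < l$) and $x^l$. Subtracting a reference value — I would subtract $s_n$ itself, or better $s_{N_1}$ — and splitting the index range at $N_1$:
\begin{equation*}
|F_l(x) - s_{N_1}| \leq (1-x)\sum_{i=0}^{N_1-1}|s_i - s_{N_1}|x^i + \Big((1-x)\!\!\sum_{i=N_1}^{l-1}\!\! x^i + x^l\Big)\max_{N_1 \leq i \leq l}|s_i - s_{N_1}|.
\end{equation*}
For $i \geq N_1$ (note $l \geq N_1$ since $l = \omega(\cdot,p) \geq p \geq 1$ and more to the point $\max\{N+g(N), l\}$ is the Cauchy range), the Cauchy hypothesis gives $|s_i - s_{N_1}| \leq \tfrac{\varepsilon}{4}$; together with $i, n \in [N_1; \max\{N+g(N), l\}]$ this also controls $|s_{N_1} - s_n| \leq \tfrac{\varepsilon}{4}$. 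For the low-index block $i < N_1$ we have $|s_i - s_{N_1}| \leq 2L$ (or $|s_i| + |s_{N_1}| \leq 2L$), so this block contributes at most $(1-x) \cdot N_1 \cdot 2L \leq \tfrac{\varepsilon}{8LN_1} \cdot 2L N_1 = \tfrac{\varepsilon}{4}$ using the second hypothesis $1 - x_m \leq \tfrac{\varepsilon}{8LN_1}$ (the constant $8$ absorbing the factor $2$).

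Adding up: $\tfrac{\varepsilon}{8}$ (power-series tail) $+\ \tfrac{\varepsilon}{4}$ (low block) $+\ \tfrac{\varepsilon}{4}$ (high block, $|F_l - s_{N_1}|$ part) $+\ \tfrac{\varepsilon}{4}$ ($|s_{N_1} - s_n|$) $\leq \varepsilon$ — the bookkeeping of the fractions is exactly what dictates the $\varepsilon/4$, $\varepsilon/8$, $\varepsilon/(8LN_1)$ appearing in the statement, so I would set up the split to make these match. The main obstacle I anticipate is not any single inequality but getting the index ranges to line up: one must check that every $s_i$ invoked with the Cauchy bound really has its index in $[N_1; \max\{N+g(N), l\}]$ — in particular that $l$ itself lies in this range (it does, as $\max\{N+g(N), l\} \geq l$), and that $m$ lies in $[N_2; N+g(N)]$ so the bound on $1 - x_m$ applies. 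A minor subtlety worth stating explicitly is the passage from the bound $L$ on $\seq{|s_n|}$ to a bound on $\seq{|a_n|}$, namely $|a_n| = |s_n - s_{n-1}| \leq 2L$, which is why $L$ appears multiplied by $2$-related constants throughout; once that is noted, everything is elementary arithmetic and the finite geometric sum identity above.
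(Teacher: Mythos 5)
Your proposal is correct and follows essentially the same route as the paper: the Abel-summation identity $F_l(x)=s_lx^l+(1-x)\sum_{i=0}^{l-1}s_ix^i$, a split of the sum at $N_1$ with the low block controlled by $2LN_1(1-x_m)\leq\tfrac{\varepsilon}{4}$ and the high block by the Cauchy hypothesis, and Lemma~\ref{lem-powerconv} (with $|a_j|\leq 2L$) for the tail $|F(x_m)-F_l(x_m)|$; routing through $s_{N_1}$ instead of $s_n$ is only a cosmetic variation. One small correction: with coefficient bound $2L$ and $l=\omega(\tfrac{\varepsilon}{8Lp},p)$, Lemma~\ref{lem-powerconv} yields a tail of $\tfrac{\varepsilon}{4}$ (since $2Lp\,x_m^l\leq 2Lp\cdot\tfrac{\varepsilon}{8Lp}$), not $\tfrac{\varepsilon}{8}$, but your budget still closes since $\tfrac{\varepsilon}{4}+\tfrac{\varepsilon}{4}+\tfrac{\varepsilon}{4}+\tfrac{\varepsilon}{4}=\varepsilon$, exactly as in the paper.
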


\begin{proof}
Fix some $m,n\in [N;N+g(N)]$. We first note that since $m\geq N$ then $m\geq N_2$ and thus $m\in [N_2;N+g(N)]$ from which it follows that $2LN_1(1-x_m)\leq \tfrac{\varepsilon}{4}$. Using this together with the fact that $n\in [N_1;\max\{N+g(N),l\}]$ and thus $|s_i-s_n|\leq\tfrac{\varepsilon}{4}$ for any $N_1\leq i\leq l\leq \max\{N+g(N),l\}$ we have
\begin{equation}
\label{eqn-abelfin1}
\begin{aligned}
\norm{(1-x_m)\sum_{i=0}^{l-1}(s_i-s_n)x_m^i}&\leq (1-x_m)\sum_{i=0}^{l-1}|s_i-s_n| x_m^i\\
&\leq (1-x_m)\sum_{i=0}^{N_1-1}|s_i-s_n|x_m^i+(1-x_m)\sum_{i=N_1}^{l-1}|s_i-s_n|x_m^i\\
&\leq (1-x_m)\sum_{i=0}^{N_1-1}(|s_i|+|s_n|)+(1-x_m)\cdot \tfrac{\varepsilon}{4}\sum_{i=N_1}^{l-1}x_m^i\\
&\leq 2LN_1(1-x_m)+\tfrac{\varepsilon}{4} \cdot(x_m^{N_1}-x_m^l)\leq \tfrac{\varepsilon}{4}+\tfrac{\varepsilon}{4}=\tfrac{\varepsilon}{2}.
\end{aligned}
\end{equation}
Next, using the fact that for any $x$ and $l$ we have $$F_l(x)=s_lx^l+(1-x)\sum_{i=0}^{l-1} s_ix^i,$$ together with (\ref{eqn-abelfin1}) we see that
\begin{equation}
\label{eqn-abelfin2}
\begin{aligned}
|F_l(x_m)-s_n|&=\norm{s_lx_{m}^l+(1-x_m)\sum_{i=0}^{l-1}s_ix_m^i-s_n}\\
&\leq |s_lx_m^l|+\norm{(1-x_m)\sum_{i=0}^{l-1}(s_i-s_n)x_m^i}+\norm{(1-x_m)\sum_{i=0}^{l-1}s_nx_m^i-s_n}\\
&\leq |s_l|x_m^l+\tfrac{\varepsilon}{2}+|s_n|x_m^l\leq \tfrac{3\varepsilon}{4}
\end{aligned}
\end{equation}
where for the last step we use that $(|s_l|+|s_n|)x_m^l\leq 2Lx_m^l\leq \frac{\varepsilon}{4p}\leq \frac{\varepsilon}{4}$ which holds by definition of $l$ together with the fact that $x_m\in [0,1-\tfrac{1}{p}]$. Finally, observing that $|a_j|=|s_j-s_{j-1}|\leq |s_j|+|s_{j-1}|\leq 2L$ for any $j\in\NN$, by Lemma \ref{lem-powerconv} and (\ref{eqn-abelfin2}) we have
\begin{equation*}
|F(x_m)-s_n|\leq |F(x_m)-F_l(x_m)|+|F_l(x_m)-s_n|\leq\tfrac{\varepsilon}{4}+\tfrac{3\varepsilon}{4}\leq \varepsilon,
\end{equation*}
which completes the proof.
\end{proof}

\begin{theorem}
[Finite Tauber's theorem]
\label{thm-taubfin}
Let $\seq{a_n}$ be an arbitrary sequence of reals, and $L$ a bound for $\seq{|a_n|}$. Define $v_n:=1-\tfrac{1}{n}$, and fix some $\varepsilon\in\QQ_+$ and $g:\NN\to\NN$. Suppose that $N_1,N_2\in\NN$ are such that
\begin{equation*}
i|a_i|\leq\frac{\varepsilon}{8} \ \ \ \mbox{and} \ \ \ |F(v_m)-F(v_n)|\leq\frac{\varepsilon}{4}
\end{equation*}
for all $i\in [N_1;l]$ and all $m,n\in [N_2;N+g(N)]$ where 
\begin{itemize}

\item $\displaystyle N:=\max\left\{\frac{2LN_1^2}{\varepsilon},N_2\right\}$,

\item $\displaystyle l:=\omega\left(\frac{\varepsilon}{4Lp},p\right)$ for $p:=N+g(N)$.

\end{itemize}
%
%\begin{equation*}
%N:=\max\left\{\frac{2LN_1^2}{\varepsilon},N_2\right\} \ \ \ \mbox{and} \ \ \ %l:=\omega\left(\frac{\varepsilon}{4Lp},p\right)
%\end{equation*}
%
Then we have $|F(v_m)-s_n|\leq \varepsilon$ for all $m,n\in [N;N+g(N)]$.
\end{theorem}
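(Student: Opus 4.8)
The plan is to reduce, via the Cauchy hypothesis on $\seq{F(v_n)}$, to a comparison between $F(v_n)$ and $s_n$ \emph{at the same index}, and then to estimate $|F(v_n)-s_n|$ by a finitary version of the classical proof of Tauber's theorem, truncating the infinite power series $F(v_n)=\sum_{i=0}^\infty a_i v_n^i$ to $F_l(v_n)$ by means of Lemma~\ref{lem-powerconv}. Concretely, fix $m,n\in[N;N+g(N)]$. Since $N\geq N_2$ we have $m,n\in[N_2;N+g(N)]$, so the Cauchy hypothesis gives $|F(v_m)-F(v_n)|\leq\tfrac{\varepsilon}{4}$, and it therefore suffices to prove $|F(v_n)-s_n|\leq\tfrac{3\varepsilon}{4}$.

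To this end, observe that $n\leq N+g(N)=p\leq l$, so $v_n=1-\tfrac1n\in[0,1-\tfrac1p]$ and $l\geq n$. Since $l=\omega(\tfrac{\varepsilon}{4Lp},p)$ and $|a_i|\leq L$, Lemma~\ref{lem-powerconv} gives $|F(v_n)-F_l(v_n)|\leq\tfrac{\varepsilon}{4}$, so it remains to bound $|F_l(v_n)-s_n|$ by $\tfrac{\varepsilon}{2}$. Using $l\geq n$ we may write
\begin{equation*}
F_l(v_n)-s_n=\sum_{i=0}^{n}a_i(v_n^i-1)+\sum_{i=n+1}^{l}a_iv_n^i,
\end{equation*}
so that $|F_l(v_n)-s_n|\leq\sum_{i=0}^n|a_i|(1-v_n^i)+\sum_{i=n+1}^l|a_i|v_n^i$.

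For the first (``head'') sum I would use $1-v_n^i=(1-v_n)\sum_{j=0}^{i-1}v_n^j\leq i(1-v_n)=\tfrac in$ and split the range at $N_1$: the terms with $i<N_1$ contribute at most $\tfrac1n\sum_{i=0}^{N_1-1}iL\leq\tfrac{LN_1^2}{2n}\leq\tfrac{LN_1^2}{2N}\leq\tfrac\varepsilon4$, by the choice $N\geq\tfrac{2LN_1^2}{\varepsilon}$, while the terms with $N_1\leq i\leq n$ contribute at most $\tfrac1n\sum_{i=N_1}^n i|a_i|\leq\tfrac1n\cdot n\cdot\tfrac\varepsilon8=\tfrac\varepsilon8$, using $[N_1;n]\subseteq[N_1;l]$ and the hypothesis $i|a_i|\leq\tfrac\varepsilon8$. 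For the second (``tail'') sum, again $[n+1;l]\subseteq[N_1;l]$, so $|a_i|\leq\tfrac{\varepsilon}{8i}\leq\tfrac{\varepsilon}{8(n+1)}$ there, whence $\sum_{i=n+1}^l|a_i|v_n^i\leq\tfrac{\varepsilon}{8(n+1)}\sum_{i=n+1}^\infty v_n^i=\tfrac{\varepsilon}{8(n+1)}\cdot\tfrac{v_n^{n+1}}{1-v_n}\leq\tfrac{\varepsilon}{8(n+1)}\cdot n\leq\tfrac\varepsilon8$, since $\tfrac{1}{1-v_n}=n$. Adding these gives $|F_l(v_n)-s_n|\leq\tfrac\varepsilon4+\tfrac\varepsilon8+\tfrac\varepsilon8=\tfrac\varepsilon2$, hence $|F(v_n)-s_n|\leq\tfrac{3\varepsilon}{4}$, and finally $|F(v_m)-s_n|\leq\tfrac\varepsilon4+\tfrac{3\varepsilon}{4}=\varepsilon$.

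The only genuinely non-routine point is the opening reduction. The classical argument crucially evaluates $F$ at $x=1-\tfrac1n$ with the \emph{same} $n$ as the partial sum $s_n$, so that $1-x=\tfrac1n$ exactly cancels the factor $n$ produced by the head estimate $\sum_{i\leq n}i|a_i|\leq n\cdot\sup_i i|a_i|$; in the metastable setting $F(v_m)$ and $s_n$ carry independent indices and a direct head estimate would leave behind the unbounded factor $\tfrac nm$, so routing through $|F(v_m)-F(v_n)|$ via the Cauchy hypothesis is precisely what restores the index alignment. The remaining work is bookkeeping: verifying the elementary inclusions between the relevant index intervals (in particular $n\leq p\leq l$) and that $v_n$ lies in the compact interval $[0,1-\tfrac1p]$ on which Lemma~\ref{lem-powerconv} applies.
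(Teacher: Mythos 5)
Your proposal is correct and follows essentially the same route as the paper's proof: the Cauchy hypothesis aligns $F(v_m)$ with $F(v_n)$, Lemma~\ref{lem-powerconv} truncates at $l$, and the remaining sum is split into the head up to $n$ (itself split at $N_1$, using $1-v_n^i\leq i(1-v_n)$ and $N\geq 2LN_1^2/\varepsilon$) and the tail from $n+1$ to $l$ (using $|a_i|\leq\varepsilon/8i$), with the same $\varepsilon$-budget. The only difference is cosmetic: you bound $|F_l(v_n)-s_n|$ in one step where the paper goes through $F_n(v_n)$ explicitly, but the individual estimates are identical.
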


\begin{proof}
Fix some $m,n\in [N;N+g(N)]$. We first note that since $\frac{2LN_1^2}{\varepsilon}\leq n$ then we have $\frac{LN_1^2}{2n}\leq \frac{\varepsilon}{4}$, and since $N_1\leq \tfrac{2LN_1^2}{\varepsilon}\leq n\leq l$ then for any $N_1\leq i\leq n$ we have $i|a_i|\leq\frac{\varepsilon}{8}$. Therefore
\begin{equation}
\label{eqn-taubfin0}
\begin{aligned}
|F_n(v_n)-s_n|&\leq \sum_{i=0}^n |a_i|(1-v_n^i)\leq \sum_{i=0}^n i|a_i|(1-v_n)=\frac{1}{n}\sum_{i=0}^n i|a_i|\\
&=\frac{1}{n}\sum_{i=0}^{N_1-1}i|a_i|+\frac{1}{n}\sum_{i=N_1}^n i|a_i|\leq \frac{L}{n}\cdot\frac{1}{2}(N_1-1)N_1+\frac{\varepsilon}{8n}(n-N_1)\\
&\leq \frac{LN_1^2}{2n}+\frac{\varepsilon}{8}\leq \frac{3\varepsilon}{8}
\end{aligned}
\end{equation}
where for the second step we use that $(1-x^i)\leq i(1-x)$. Similarly, we have
\begin{equation}
\label{eqn-taubfin1}
\begin{aligned}
|F_l(v_n)-F_n(v_n)|&=\norm{\sum_{i=n+1}^l a_iv_n^i}\leq \frac{\varepsilon}{8}\sum_{i=n+1}^l \frac{v_n^i}{i}\\
&\leq \frac{\varepsilon}{8(n+1)}\sum_{i=n+1}^l v_n^i \leq \frac{\varepsilon(v_n^{n+1}-v_n^{l+1})}{8(n+1)(1-v_n)}\leq \frac{\varepsilon n}{8(n+1)}\leq\frac{\varepsilon}{8}.
\end{aligned}
\end{equation}
Now, since for any $n\in [N;N+g(N)]$ we have $v_n\in [0,1-\tfrac{1}{p}]$ for $p:=N+g(N)$ by definition, it follows by Lemma \ref{lem-powerconv} that $|F(v_n)-F_l(v_n)|\leq \tfrac{\varepsilon}{4}$, and thus 
\begin{equation*}
\begin{aligned}
|F(v_m)-s_n|&\leq |F(v_m)-F(v_n)|+|F(v_n)-F_l(v_n)|+|F_l(v_n)-s_n|\\
&\leq \frac{\varepsilon}{4}+\frac{\varepsilon}{4}+|F_l(v_n)-F_n(v_n)|+|F_n(v_n)-s_n|\\
&\leq \frac{\varepsilon}{2}+\frac{\varepsilon}{8}+\frac{3\varepsilon}{8}\leq \varepsilon
\end{aligned}
\end{equation*}
where the last line follows from (\ref{eqn-taubfin0}) and (\ref{eqn-taubfin1}). This completes the proof.
\end{proof}

%%%%%%%%%%%%%%%%%%%%%%%%%%%%%%%%%%%%%%%%%%%%%%%%%
\section{Reobtaining the infinitary variants}
%%%%%%%%%%%%%%%%%%%%%%%%%%%%%%%%%%%%%%%%%%%%%%%%%
\label{sec-reobtain}

We conclude by showing how our the theorems of the previous section, though finitary in nature, are strong enough to allow us to reobtain the usual formulations of Abel and Tauber's theorems using purely logical reasoning. We first need a simple but crucial lemma which we use throughout this section.
\begin{lemma}
\label{lem-pi3}
Let $P(\varepsilon,X)$ be some predicate on $\varepsilon\in \QQ_+$ and finite subsets $X\subset \NN$. Then the following two statements are equivalent:
\begin{enumerate}[(a)]

\item $\forall \varepsilon\in \QQ_+\; \exists n\in \NN\; \forall k\; P(\varepsilon,[n;k])$,

\item $\forall \varepsilon\in \QQ_+\; \forall g:\NN\to \NN\; \exists n\in \NN\; P(\varepsilon,[n;g(n)])$. 

\end{enumerate}
\end{lemma}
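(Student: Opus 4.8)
The plan is to prove the two implications separately, with the direction $(a)\Rightarrow(b)$ being the straightforward one and $(b)\Rightarrow(a)$ requiring the characteristic proof-by-contradiction argument familiar from the equivalence of Cauchy convergence and metastability.

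For $(a)\Rightarrow(b)$, fix $\varepsilon\in\QQ_+$ and $g:\NN\to\NN$. By $(a)$ there is some $n$ with $P(\varepsilon,[n;k])$ for \emph{all} $k$; in particular this holds for $k=g(n)$, which gives exactly $P(\varepsilon,[n;g(n)])$. (One should note the mild subtlety that if $g(n)<n$ then $[n;g(n)]=\emptyset$ by the stated convention, so one implicitly assumes $P$ is not vacuously problematic here; but since $(a)$ quantifies over all $k$ including $k<n$, the instance $k=g(n)$ is covered regardless.)

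The interesting direction is $(b)\Rightarrow(a)$. I would argue by contraposition: suppose $(a)$ fails, so there is a fixed $\varepsilon_0\in\QQ_+$ such that for every $n\in\NN$ there exists some $k$ (depending on $n$) with $\neg P(\varepsilon_0,[n;k])$. Using this, define $g:\NN\to\NN$ by letting $g(n)$ be such a witnessing $k$ for each $n$ — i.e. $\neg P(\varepsilon_0,[n;g(n)])$ for all $n$. Then this $\varepsilon_0$ together with this $g$ witnesses the failure of $(b)$, since no $n$ can satisfy $P(\varepsilon_0,[n;g(n)])$. Hence $(b)\Rightarrow(a)$.

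The main obstacle — and it is a genuinely conceptual rather than computational one worth flagging explicitly — is that the construction of $g$ in the last step uses a choice function to pick, for each $n$, one of possibly many witnessing values $k$. In a purely constructive setting this would require some care, but since the lemma is stated classically and the paper's stated purpose here is to reobtain the infinitary theorems "using purely logical reasoning", an appeal to (countable) choice, or simply to the existence of such a function given the $\forall n\,\exists k$ hypothesis, is exactly the kind of logical step intended. No result earlier in the excerpt is needed; the proof is entirely propositional-plus-choice.
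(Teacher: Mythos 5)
Your proposal is correct and follows essentially the same route as the paper: instantiating $k:=g(n)$ for $(a)\Rightarrow(b)$, and for $(b)\Rightarrow(a)$ assuming the failure of $(a)$ and using a choice function to build $g$ with $\neg P(\varepsilon_0,[n;g(n)])$ for all $n$, contradicting (the contrapositive of) $(b)$. The appeal to (a weak form of) the axiom of choice that you flag is exactly the step the paper makes as well.
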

\begin{proof}
For $(a)\Rightarrow (b)$, if for some $\varepsilon\in \QQ_+$ there is some $n\in\NN$ satisfying $P(\varepsilon,[n;k])$ then in particular for any $g:\NN\to\NN$ we have $P(\varepsilon,[n;g(n)])$. To establish $(b)\Rightarrow (a)$ we suppose for contradiction that $(a)$ is false, and thus for some $\varepsilon\in \QQ_+$ it is the case that for $\forall n\in\NN\; \exists k\; \neg P(\varepsilon,[n;k])$. Therefore by the axiom of choice there is some $g:\NN\to\NN$ satisfying $\forall n\in\NN\; \neg P(\varepsilon,[n;g(n)])$, contradicting $(b)$.
\end{proof}
By setting $P(\varepsilon,X):\Leftrightarrow \forall m,n\in X(|s_m-s_n|\leq \varepsilon)$ the equivalence of Cauchy convergence and metastability in the sense of (\ref{eqn-metastable}) is a direct corollary of the above lemma -- note that the slightly different statement $\forall \varepsilon,g\exists n P(\varepsilon,[n;n+g(n)])$ is just another way of expressing (b). By extending Lemma \ref{lem-pi3} to the various other Cauchy properties involved in our finitary theorems, we are able to prove the original, infinitary variants of those theorems.
\begin{proof}[Proof of Theorem \ref{thm-abelcauchy} from Theorem \ref{thm-abelfin}]
Suppose that $\seq{a_n}$ and $\seq{x_m}$ are such that (i) $\seq{s_n}$ is Cauchy, (ii) $\lim_{m\to\infty} =1$. Note that since $\seq{s_n}$ convergences then $\seq{|s_n|}$ must be bounded above by some $L$. Now fix some arbitrary $\varepsilon\in\QQ_+$ and $g:\NN\to\NN$. From $\lim_{m\to\infty} x_m=1$ and Lemma \ref{lem-pi3} we can infer that for any $\delta>0$ and $h:\NN\to\NN$ there exists some $n\in\NN$ such that
\begin{equation*}
\forall m\in [n;h(n)](1-\delta\leq x_m).
\end{equation*}
Using a weak form of the axiom of choice, let $\Phi(\delta,h)$ be the functional which returns such an $n$ for any given $\delta$ and $h$, and define $f:\NN\to\NN$ by
\begin{equation*}
f(a):=\max\left\{M_a+g(M_a),\omega\left(\frac{\varepsilon}{8Lp_{a}},p_{a}\right)\right\}
\end{equation*}
where we define
\begin{equation*}
\begin{aligned}
M_a&:=\max\left\{a,\Phi(\frac{\varepsilon}{8La},h_a)\right\}\\[2mm]
p_a&:=\ceil{\max\{1/(1-x_m)\; : \; m\leq M_a+g(M_a)\}}\\[2mm]
h_a(b)&:=\max\{a,b\}+g(\max\{a,b\}).
\end{aligned}
\end{equation*}
Now, from Cauchyness and hence metastability of $\seq{s_n}$ we can infer that there exists some $N_1\in\NN$ such that
\begin{equation*}
\forall i,n\in [N_1;f(N_1)](|s_i-s_n|\leq\tfrac{\varepsilon}{4}).
\end{equation*}
Define $N_2:=\Phi(\frac{\varepsilon}{8LN_1},h_{N_1})$, so that
\begin{equation*}
\forall m\in [N_2;h_{N_1}(N_2)]\left(1-\frac{\varepsilon}{8LN_1}\leq x_m\right).
\end{equation*}
Then setting $N:=\max\{N_1,N_2\}$ we see that $M_{N_1}=N$ and therefore $$f(N_1)=\max\{N+g(N),l\}$$ for $l=\omega(\frac{\varepsilon}{8Lp},p)$ with $p=\ceil{\max\{1/(1-x_m)\; : \; m\leq N+g(N)\}}$, and in addition $$h_{N_1}(N_2)=N+g(N).$$ Observing finally that for $m\leq N+g(N)$ we must have $1/(1-x_m)\leq p$ and thus $x_m\leq 1-\frac{1}{p}$, we see that $N_1,N_2$ and $p$ satisfy the premise of Theorem \ref{thm-abelfin}, and therefore $|F(x_m)-s_n|\leq\varepsilon$ for all $m,n\in [N,N+g(N)]$. But since $\varepsilon$ and $g$ were arbitrary, by Lemma \ref{lem-pi3} this means that $\lim_{m,n\to\infty}|F(x_m)-s_n|=0$.
\end{proof}

\begin{proof}[Proof of Theorem \ref{thm-taubcauchy} from Theorem \ref{thm-taub}]
Suppose that $\{a_n\}$ is a sequence of reals such that (i) $a_n=o(1/n)$ and (ii) $\seq{F(v_m)}$ is Cauchy. From (i) we must have that $\seq{|a_n|}$ is bounded above by some $L$. Now fix some arbitrary $\varepsilon\in\QQ_+$ and $g:\NN\to\NN$. From $\lim_{n\to\infty} n|a_n|=0$ and Lemma \ref{lem-pi3} we can infer that for any $h:\NN\to\NN$ there exists some $n\in\NN$ such that
\begin{equation*}
\forall i\in [n;h(n)](i|a_i|\leq \tfrac{\varepsilon}{8}).
\end{equation*}
Let $\Psi(h)$ be the functional which returns such an $n$ for any given $h$, and define $f:\NN\to\NN$ by
\begin{equation*}
f(a):=\omega\left(\frac{\varepsilon}{4Lp_a},p_a\right)
\end{equation*}
where we define
\begin{equation*}
\begin{aligned}
p_a&:=M_a+g(M_a)\\
M_a&:=\max\left\{\frac{2La^2}{\varepsilon},\Psi(h_a)\right\}\\
h_a(b)&:=\max\left\{\frac{2La^2}{\varepsilon},b\right\}+g\left(\max\left\{\frac{2La^2}{\varepsilon},b\right\}\right).
\end{aligned}
\end{equation*}
From Cauchyness and hence metastability of $\seq{F(v_n)}$ we infer that there exists some $N_1\in\NN$ such that
\begin{equation*}
\forall m,n\in [N_1;f(N_1)](|F(v_m)-F(v_n)|\leq\tfrac{\varepsilon}{4}).
\end{equation*} 
Define $N_2:=\Psi(h_{N_1})$, so that
\begin{equation*}
\forall i\in [N_2;h_{N_1}(N_2)](i|a_i|\leq \tfrac{\varepsilon}{8}).
\end{equation*}
Then setting $N:=M_{N_1}=\max\{\frac{2LN_1^2}{\varepsilon},N_2\}$ we see that $$f(N_1)=\omega\left(\frac{\varepsilon}{4Lp},p\right)$$ for $p=N+g(N)$ and $$h_{N_1}(N_2)=N+g(N),$$ and therefore $N_1$ and $N_2$ satisfy the premise of Theorem \ref{thm-taubfin}. Therefore $|F(v_m)-s_n|\leq\varepsilon$ for all $m,n\in [N;N+g(N)]$, and since $\varepsilon$ and $g$ were arbitrary, this means by Lemma \ref{lem-pi3} that $\lim_{m,n\to\infty}|F(v_m)-s_n|=0$.
\end{proof}
%
%%%%%%%%%%%%%%%%%%%%%%%%%%%%%%%%%%%%%%%%%%%%%%%%%
\section{General proof theoretic remarks}
%%%%%%%%%%%%%%%%%%%%%%%%%%%%%%%%%%%%%%%%%%%%%%%%%
\label{sec-prooftheory}

The main quantitative results in this paper were obtained by carrying out an analysis of the original proofs of both Abel's and Tauber's theorems using the classical G\"{o}del Dialectica interpretation (i.e. a combination of the usual Dialectica interpretation with a negative translation). In both cases, the resulting realizing terms were simple enough that the core \emph{combinatorial part} of the analysis could be presented in a traditional mathematical style, in particular without reference to higher-order functionals. This gave rise to Section \ref{sec-finitize} and Theorems \ref{thm-abelfin} and \ref{thm-taubfin}.

The full the analysis, in which higher-order rates of metastability for the conclusions of each theorem are constructed in terms of corresponding rates for the premises, follows by appealing to the results of Section \ref{sec-reobtain}. In particular, the routes from Theorem \ref{thm-abelfin} to \ref{thm-abelcauchy} resp. Theorem \ref{thm-taubfin} to \ref{thm-taubcauchy} use, in both cases, a simple form of \emph{finite bar recursion} of length two. Intuitively, this corresponds to the fact that in the standard proofs of Abel's resp. Tauber's theorem, the two main premises are combined via a form of \emph{bounded collection}, which can in turn interpreted by finite bar recursion (see also \cite{EOP(2011.0)}). Though in this paper we do not work in any formal systems, the proofs of Section \ref{sec-reobtain} essentially use just classical predicate logic together with the quantifier-free axiom of choice, and in this sense, our finitary theorems imply the infinitary versions over a very weak base theory, and using just elementary logical reasoning.

In this paper our priority was to establish finitary formulations of well-known convergence results rather than present a detailed application of the Dialectica interpretation, which is why we have kept much of the formal proof theory suppressed. Nevertheless, we conclude by sketching a simple example of how our quantitative formulation of Abel's theorem can be used to obtain concrete rates of metastability in the style of traditional proof mining. 

Let us take the following simple consequence of Abel's theorem, which follows directly from Theorem \ref{thm-abelcauchy}, using the fact that whenever $\seq{a_n}$ is a sequence of \emph{positive} reals then $\seq{s_n}$ is monotonically increasing, and thus is Cauchy whenever it is bounded above:
\begin{proposition}
Let $\seq{a_n}$ be a sequence of positive reals such that the $\seq{s_n}$ are bounded above. Then $\lim_{m,n\to\infty}|F(v_m)-s_n|=0$.
\end{proposition}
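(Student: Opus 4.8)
The plan is to obtain this proposition as an immediate instance of Theorem~\ref{thm-abelcauchy}, so the only work is to check that its hypothesis holds for the data at hand. First I would observe that since each $a_n$ is positive, the partial sums $s_n=\sum_{i=0}^n a_i$ form a monotonically increasing sequence; being bounded above by assumption, $\seq{s_n}$ therefore converges, and in particular is Cauchy. I would also note in passing that boundedness of $\seq{s_n}$ forces $\seq{|a_n|}$ to be bounded (as $|a_n|\leq|s_n|+|s_{n-1}|$), so that $F(x)=\sum_{i=0}^\infty a_ix^i$ converges for all $x\in[0,1)$ and the quantity $|F(v_m)-s_n|$ is well defined.

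Next I would instantiate Theorem~\ref{thm-abelcauchy} with the particular sequence $x_m:=v_m=1-\tfrac1m$, which plainly satisfies $\lim_{m\to\infty}x_m=1$. The theorem then yields $\lim_{m,n\to\infty}|F(v_m)-s_n|=0$ directly, which is the claim.

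There is no real obstacle here: the proposition is a corollary of Theorem~\ref{thm-abelcauchy}, the only point of substance being the classical step from ``bounded and monotone'' to ``Cauchy''. What makes the statement worth isolating is its quantitative content, which one would extract not from Theorem~\ref{thm-abelcauchy} but from the finitary Theorem~\ref{thm-abelfin}: a bounded increasing sequence $\seq{s_n}$ possesses a rate of metastability computable from the bound $L$ alone (together, as always, with $\varepsilon$ and $g$), and substituting this, along with the choice $x_m=v_m$ so that $1-x_m=\tfrac1m$ and the constraint $\tfrac1p\le 1-x_m\le\tfrac{\varepsilon}{8LN_1}$ is met by taking a suitable starting index together with a $p$ computed from $\varepsilon$, $L$ and $N_1$, into Theorem~\ref{thm-abelfin} yields an explicit rate of metastability for the conclusion $\lim_{m,n\to\infty}|F(v_m)-s_n|=0$.
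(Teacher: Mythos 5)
Your proof is correct and follows exactly the paper's route: the proposition is obtained as an immediate instance of Theorem~\ref{thm-abelcauchy} with $x_m:=v_m$, the only substantive point being that positivity of the $a_n$ makes $\seq{s_n}$ monotone increasing, hence Cauchy once bounded above. Your closing remarks on extracting a rate of metastability via Theorem~\ref{thm-abelfin} anticipate the paper's subsequent quantitative proposition but are not needed for this statement.
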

An analysis of this result using Theorem \ref{thm-abelfin} together with ideas from Section \ref{sec-reobtain} yields the following.
\begin{proposition}
Let $\seq{a_n}$ be a sequence of positive reals and $L$ a bound for the $\seq{s_n}$. Then for any $\varepsilon\in \QQ_+$ and $g:\NN\to\NN$ we have
\begin{equation*}
\exists N\leq \Gamma_L(\varepsilon,g)\; \forall m,n\in [N,N+g(N)]\; (|F(v_m)-s_n)|\leq\varepsilon)
\end{equation*}
for
\begin{itemize}

\item $\displaystyle \Gamma_L(\varepsilon,g):=\Bigl\lceil\frac{8Lf^{(\ceil{4L/\varepsilon})}(0)}{\varepsilon}\Bigr\rceil$

\item $f(a):=p_a\cdot \Bigl\lceil\log\left(\frac{8Lp_a}{\varepsilon}\right)\Bigr\rceil$

\item $p_a:=\tilde g\left(\Bigl\lceil\frac{8La}{\varepsilon}\Bigr\rceil\right)$

\end{itemize}
where $f^{(k)}(x)$ denotes the $k$-times iteration of $f$ applied to $x$, and $\tilde g(x)$ is defined by $\tilde g(x):=x+g(x)$.
\end{proposition}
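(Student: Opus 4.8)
The plan is to derive the bound by a single application of the Finite Abel's Theorem (Theorem~\ref{thm-abelfin}) with the sequence $x_m:=v_m=1-\tfrac1m$, feeding in parameters $N_1,N_2,p$ obtained by combining an explicit rate of metastability for the monotone bounded sequence $\seq{s_n}$ with the (entirely explicit) rate at which $v_m\to 1$. First I would record that since the $a_n$ are positive, $\seq{s_n}$ is monotonically increasing and takes values in $[0,L]$, and that $|a_n|=|s_n-s_{n-1}|\leq 2L$, so $F(v_m)$ is well-defined.

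Next I would set up the instantiation of Theorem~\ref{thm-abelfin}. Since $1-v_m=\tfrac1m$, the second hypothesis there (that $\tfrac1p\leq 1-v_m\leq\tfrac{\varepsilon}{8LN_1}$ for all $m\in[N_2;N+g(N)]$) says precisely that $m\leq p$ and $m\geq\tfrac{8LN_1}{\varepsilon}$ on this range; the former is ensured by taking $p:=N+g(N)$, the latter by taking $N_2:=\ceil{\tfrac{8LN_1}{\varepsilon}}$, which moreover makes $N=\max\{N_1,N_2\}=N_2$ and $p=N_2+g(N_2)=p_{N_1}$ in the notation of the statement. It then only remains to choose $N_1$ so that the first hypothesis, $|s_i-s_n|\leq\tfrac\varepsilon4$ for all $i,n\in[N_1;\max\{N+g(N),l\}]$ with $l=\omega(\tfrac{\varepsilon}{8Lp},p)$, is met. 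Using the concrete $\omega(\delta,p)=p\ceil{\log(1/\delta)}$ from Section~\ref{sec-finitize} one computes $l=p_{N_1}\ceil{\log(\tfrac{8Lp_{N_1}}{\varepsilon})}=f(N_1)$, and since $\omega(\delta,p)\geq p$ we have $l\geq N+g(N)$, so the range in question collapses to $[N_1;f(N_1)]$.

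Everything has thus been reduced to finding $N_1\leq f^{(\ceil{4L/\varepsilon})}(0)$ with $|s_i-s_n|\leq\tfrac\varepsilon4$ for all $i,n\in[N_1;f(N_1)]$, i.e.\ a point of metastability for the monotone sequence $\seq{s_n}$ with error $\tfrac\varepsilon4$ and `modulus' $f$. This I would establish by the standard pigeonhole argument for monotone bounded sequences: set $b_0:=0$ and $b_{j+1}:=f(b_j)$; the $\ceil{4L/\varepsilon}$ increments $s_{b_{j+1}}-s_{b_j}$ are nonnegative and sum to at most $L$, so at least one of them is $\leq\tfrac\varepsilon4$, and taking $N_1:=b_j$ for such a $j$ and using monotonicity of $\seq{s_n}$ on $[b_j;b_{j+1}]=[N_1;f(N_1)]$ yields the claim, with $N_1=f^{(j)}(0)\leq f^{(\ceil{4L/\varepsilon})}(0)$ since $f$ is inflationary. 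Theorem~\ref{thm-abelfin} then delivers $|F(v_m)-s_n|\leq\varepsilon$ for all $m,n\in[N;N+g(N)]$ where $N=N_2=\ceil{\tfrac{8LN_1}{\varepsilon}}\leq\ceil{\tfrac{8Lf^{(\ceil{4L/\varepsilon})}(0)}{\varepsilon}}=\Gamma_L(\varepsilon,g)$.

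The delicate point is the bookkeeping: $f$ must be defined so that the dependency chain $N_1\mapsto N_2\mapsto N\mapsto p\mapsto l$ buried in the hypotheses of Theorem~\ref{thm-abelfin} closes up exactly when $N_1$ is fed back in, so that a single application of the monotone-metastability bound suffices — this is precisely the (here essentially degenerate, since the premise $v_m\to 1$ comes with an explicit rate) instance of the `finite bar recursion of length two' alluded to in Section~\ref{sec-prooftheory}. Beyond this, the only care needed is to check that the parameters lie in the non-degenerate regime (harmless, since metastability need only be established for small $\varepsilon$), so that indeed $N=N_2$ and $f$ is genuinely inflationary.
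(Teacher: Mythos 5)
Your proposal is correct and follows essentially the same route as the paper: instantiate Theorem \ref{thm-abelfin} with $x_m:=v_m$, $N_2:=\lceil 8LN_1/\varepsilon\rceil$, $p:=N+g(N)=p_{N_1}$, so that $l=f(N_1)$, and then bound $N_1$ by the standard rate of metastability $f^{(\ceil{4L/\varepsilon})}(0)$ for monotone sequences bounded by $L$. The only cosmetic differences are that the paper organises this by specialising its general derivation of Theorem \ref{thm-abelcauchy} from Theorem \ref{thm-abelfin} (with $\Phi(\delta,h)=\ceil{1/\delta}$) and cites Kohlenbach's Proposition 2.26 for the monotone metastability bound, which you instead reprove directly by the pigeonhole argument.
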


\begin{proof}
Following closely and using notation from the proof of Theorem \ref{thm-abelcauchy} from Theorem \ref{thm-abelfin} in Section \ref{sec-reobtain}, we first note that setting $x_m:=v_m=1-\tfrac{1}{m}$, for any $\delta>0$ and $h:\NN\to\NN$ we trivially have
\begin{equation*}
\forall m\in [n;h(n)](1-\delta\leq v_m)
\end{equation*}
for $n:=\ceil{1/\delta}$, and thus we can define $\Phi(\delta,h):=\ceil{1/\delta}$. Therefore in this case, $M_a=\max\{a,\ceil{8La/\varepsilon}\}=\ceil{8La/\varepsilon}$ and $p_a=M_a+g(M_a)=\tilde g(\ceil{8La/\varepsilon})$, and thus using our explicit definition of $\omega(\varepsilon,p)=p\cdot\ceil{\log(1/\varepsilon)}$ from Section \ref{sec-finitize}, we see that
\begin{equation*}
f(a)=\max\{p_a,\omega(\varepsilon/8Lp_a,p_a)\}=p_a\cdot \ceil{\log(8Lp_a/\varepsilon)}.
\end{equation*}
Now, it is a well-known fact from proof mining (cf. \cite[Proposition 2.26]{Kohlenbach(2008.0)}) that for any monotone increasing $\seq{s_n}$ bounded above by some $L$, a bound on the corresponding rate of metastability i.e.
\begin{equation*}
\forall \varepsilon'\in \QQ_+\; \forall f':\NN\to\NN\; \exists N'\leq \Psi(\varepsilon',f') \forall m\in [N';f'(N')](|s_m-s_n|\leq\varepsilon')
\end{equation*}
is given by $\Psi(\varepsilon',f'):={f'}^{\ceil{(L/\varepsilon)}}(0)$. Thus in this case, setting $\varepsilon':=\frac{\varepsilon}{4}$ and $f'=f$ we would have $N_1\leq f^{\ceil{(4L/\varepsilon)}}(0)$ and therefore
\begin{equation*}
N_2:=\Phi\left(\frac{\varepsilon}{8LN_1},h_{N_1}\right)=\Bigl\lceil \frac{8LN_1}{\varepsilon} \Bigr\rceil\leq \Bigl\lceil \frac{8Lf^{\ceil{(4L/\varepsilon)}}(0)}{\varepsilon} \Bigr\rceil.
\end{equation*}
Therefore by Theorem \ref{thm-abelfin}, $N:=\max\{N_1,N_2\}=N_2$ satisfies
\begin{equation*}
\forall m,n\in [N;N+g(N)]\; (|F(v_m)-s_n)|\leq\varepsilon).
\end{equation*}
Backtracking through the above definitions yields the given bound on $N$.
\end{proof}

%\begin{acknowledgement}
%This work was supported by the German Science Foundation (DFG Project KO 1737/6-1).
%\end{acknowledgement}

\bibliographystyle{plain}
\bibliography{/home/thomas/Dropbox/tp}

\end{document}